\documentclass[reqno]{amsart}
\usepackage{amssymb,amsthm,amsfonts,amstext,amsmath}
\usepackage{newcent}
\usepackage{changes}
\definechangesauthor[name=Guilherme, color=blue]{Gui}

\usepackage{helvet}          
\usepackage{courier}         

\usepackage{graphicx}
\usepackage{enumerate}
\numberwithin{equation}{section}
\usepackage{color}
\usepackage{float}
\usepackage{mathrsfs}
\usepackage[T1]{fontenc}

\usepackage{float}
\usepackage{dsfont}
\usepackage[colorlinks=true,linkcolor=blue,citecolor=magenta]{hyperref}

\usepackage{tikz}
\usetikzlibrary{arrows,decorations.pathmorphing}
\usetikzlibrary{calc, shapes, backgrounds,arrows.meta,patterns,fadings}
\usetikzlibrary{decorations.markings,positioning}

\def\centerarc[#1](#2)(#3:#4:#5){\draw[#1] ($(#2)+({#5*cos(#3)},{#5*sin(#3)})$) arc (#3:#4:#5);}
\tikzset{
  big arrow 3/.style={
    decoration={markings,mark=at position 0.7 with {\arrow[scale=0.7,#1]{>}}},
    postaction={decorate}},
  big arrow/.default=black}
  
\def\centerarc[#1](#2)(#3:#4:#5){\draw[#1] ($(#2)+({#5*cos(#3)},{#5*sin(#3)})$) arc (#3:#4:#5);}

\newtheorem{theorem}{Theorem}[section]
\newtheorem{lemma}[theorem]{Lemma}
\newtheorem{proposition}[theorem]{Proposition}

\newtheorem{remark}[theorem]{Remark}
\newtheorem{definition}[theorem]{Definition}
\newtheorem{conjecture}[theorem]{Conjecture}

\newcommand{\one}{\mathds{1}}
\newcommand{\mc}[1]{{\mathcal #1}}

\newcommand{\bb}[1]{{\mathbb #1}}

\newcommand{\p}{\partial}

\newcommand{\dd}{\displaystyle}

\newcommand{\ignore}[1]{}

\keywords{Reinforced random walk, ant random walk, directed edges, random walk on graphs}

\begin{document}

\title[The Directed Edge Reinforced Random Walk]{The Directed Edge Reinforced Random Walk:\\ The Ant Mill Phenomenon}

\author[D. Erhard]{Dirk Erhard}
\address{UFBA\\
 Instituto de Matem\'atica, Campus de Ondina, Av. Adhemar de Barros, S/N. CEP 40170-110\\
Salvador, Brazil}
\curraddr{}
\email{erharddirk@gmail.com}
\thanks{}

\author[T. Franco]{Tertuliano Franco}
\address{UFBA\\
 Instituto de Matem\'atica, Campus de Ondina, Av. Adhemar de Barros, S/N. CEP 40170-110\\
Salvador, Brazil}
\curraddr{}
\email{tertu@ufba.br}
\thanks{}

\author[G. Reis]{Guilherme Reis}
\address{UFBA\\
 Instituto de Matem\'atica, Campus de Ondina, Av. Adhemar de Barros, S/N. CEP 40170-110\\
Salvador, Brazil}
\curraddr{}
\email{ghreis@impa.br }
\thanks{}

\subjclass[2010]{60K35, 60K37, 60G50}

\begin{abstract} 
We define here a \textit{directed edge reinforced random walk} on a connected locally finite graph.  As the name suggests, this walk keeps track of its past, and gives a bias towards directed edges previously crossed  proportional to the exponential of the number of crossings. The model is inspired by the so called \textit{Ant Mill phenomenon}, in which a group of army ants forms a continuously rotating circle until they die of exhaustion. For that reason we refer to the walk defined in this work as the \textit{Ant RW}.
Our main result justifies this name. Namely, we will show that on any finite graph which is not a tree, and on  $\bb Z^d$ with $d\geq 2$, the Ant RW almost surely gets eventually trapped into some directed circuit which will be followed forever. 
In the case of~$\bb Z$ we show that the Ant RW eventually escapes to infinity and satisfies a law of large number with a random limit which we explicitly identify.
\end{abstract}

\maketitle


\allowdisplaybreaks

\section{Introduction}
The Ant Mill is a phenomenon in which a group of blind army ants gets separated from their main group and, guided by pheromones, start to walk behind one another and in this way forms a circuit they follow until they die of exhaustion.  We refer the interested reader to the paper \cite{Delsuc}  a discussion of that phenomenon, and  to the video \cite{video} for an illustration.

In this work we investigate a model that probabilistically encodes the above phenomenon in the case of a single ant  on connected non-tree finite graphs and on $\bb Z^d$, $d\geq 2$. We then interpret the ant as a random walk with a bias towards already visited \textit{directed} edges. Here the bias increases with each crossing of a directed edge, and it decreases whenever an edge is crossed in the opposite direction. Put differently what counts is the ``net'' number of crossings.

Our model can be placed into the world of \textit{reinforced} random walks. To the best of our knowledge this notion goes back to~\cite{CopperDiaconis,Diaconis88, Pemantle1988}.  Since then, a large literature has been developed and reinforced random walks have become an active and challenging area of research. Among the most prominent models are the \textit{vertex} reinforced random walk~\cite{Pemantle1988, Pemantle1992}  and the \textit{edge} reinforced random walk~\cite{CopperDiaconis,Diaconis88}, where the bias is proportional to the number of times a certain vertex and edge respectively has been visited. 
One of the questions of interest in these models is concerned with localisation, i.e., will the random walk be eventually trapped in a finite region?
For the vertex reinforced random walk this is indeed the case as has been shown in numerous works with different stages of refinement~\cite{BRS2013,LimicTarres2010,PemantleVolkov1999,Tarres2004,Volkov2001}. 
Here, depending on the strength of the reinforcement and the underlying graph the walk may localise on two or more vertices. For the edge reinforced walk, similar results have been obtained. 

In~\cite{LimicTarres2010} it was for instance shown that if the sum of inverse of weights is finite  and under some further technical assumptions the walk eventually gets stuck on a single edge. We also mention  a model with a similar flavour and names as ours, namely, the
 \textit{directionally} reinforced random walk, which  was investigated in~\cite{Directionally2,Directionally1}. However, in that model the walker looses its memory after each change of direction, which makes the model fundamentally different to ours.

In the present work we aim at showing localisation as in the vertex or edge reinforced models. However, since the reinforcement is along directed edges  localisation on a single edge is not possible; jumping forth and back over the same edge neutralises the reinforcement. Instead we will show in our main result, Theorem~\ref{thm2.1}, localisation on circuits, which  justifies the Ant RW name for the walk: this theorem  states that on non-tree finite graphs  and on $\bb Z^d$ for $d\geq 2$, the Ant RW (Ant Random Walk) with probability one eventually gets trapped in a directed circuit which will be followed forever, similarly to the Ant Mill phenomenon mentioned at the beginning of this introduction.

What makes the Ant RW so challenging is that it is heavily non Markovian,  due to the fact that at each step the behaviour of the walker depends on its entire past. In the two previously  described models a feature that partially compensates that difficulty is monotonicity, i.e., the more often a vertex, respectively edge, is visited the more attractive it will become in the future. 

In our model this is not the case.  Indeed, if an edge $(x,y)$ was crossed as many times as the edge $(y,x)$ it is as if neither of the two were ever crossed, i.e., it is possible to ``kill'' a bias by crossing an edge in the reversed direction. Consequently, classical tools such as P\' olya Urn techniques, e.g., the Rubin construction in \cite{davis1990}, are not directly available. 

To partially compensate for that difficulty we, at least for the moment, work with a strong, i.e., exponential reinforcement. This then enables us to analyse the model in two steps. The first is completely deterministic and investigates the evolution of the environment, i.e., the field of crossing numbers induced by a fixed path. Having gained sufficient information on the environment we then use a renewal property of the dynamics to conclude the analysis. A relevant feature of the paper is the understanding of the environment;   we believe that this comprehension will be useful also for weaker reinforcement versions of the model. Finally, it is  worthy commenting that \textit{ant inspired algorithms} are in great development nowadays in Computer Science (see for instance \cite{Dorigo1,Dorigo2,Yang} and references therein), for which our result  may be  applicable.

\textbf{Organization of the paper.}
The article is organized as follows. In Section~\ref{s2}, the model is precisely defined, the main results are stated, and the main ideas are discussed. In Section~\ref{sec:3} we give the proof of Theorem~\ref{thm2.1} in the finite graph case and in Section~\ref{sec:Zd} we show  Theorem~\ref{thm2.1} in the case of $\bb Z^d$ with $d\geq 2$. Finally in the Appendix~\ref{sec:d=1} we provide the proof of Proposition~\ref{th:it_is_markovian}.
  
\section{Statements}\label{s2}
We define here the directed edge reinforced random walk, which will be  referred as  \textit{Ant RW} in the sequel, as a discrete time stochastic process on  some locally finite, connected, undirected graph  $G$ with
vertex set $V=V(G)$ and edge set $E=E(G)$.  
Given two vertices $v$ and $w$ we write $v\sim w$ if the pair $(v,w)$ forms an edge. We then define the stochastic process $(X_n)_{n\in \bb N}$  with state space $V$ by the
following transition rule. Fix a vertex $v$, and set $ X_0=v$.
For $n \geq  0$ and $\beta\in(0,\infty)$, we define
\begin{align} \label{eq:transitions}
\bb P(X_{n+1} = x | \mc G_n)\; =\; \frac{a_n(X_n, x)}{\displaystyle
	\sum_{y\sim X_n} a_n(X_n, y)}\,,
\end{align}
where $\mc G_n = \sigma(X_0, X_1, \ldots, X_n)$ is the $\sigma$-algebra generated by the walk up to time $n$. Here the \textit{weights} $a_n$ are given by
\begin{align*}
a_n(X_n,x)\;=\;\exp\big\{\beta \,c_n(X_n, x)\big\}\,\,,
\end{align*}
and the \textit{crossing numbers} $c_n(x,y)$ above are defined via
\begin{align} \label{eq:edge_weights}
c_n (x,y) \;=\; \sum_{k=0}^{n-1}\Big( \one\big[(X_k,X_{k+1})=(x,y)\big]-\one\big[(X_k,X_{k+1})=(y,x)\big]\Big)\,.
\end{align}
In plain  words, $c_n (x,y)$ is  the number of times that, up to time $n$, the walk has jumped from $x$ to $y$ minus the number of times it has jumped from $y$ to $x$. 
The parameter $\beta\in (0,\infty)$ represents the strength of the reinforcement. In the limiting case $\beta=0$ we recover the usual symmetric random walk, whereas in the other limiting case $\beta= \infty$ once the walk has crossed a certain edge $(x,y)$ from $x$ to $y$, it will always choose the same edge in the same direction once it returns to~$x$.

Our first observation reads as follows and shows that the behaviour of $X$ on $G=\bb Z$ is particularly simple. The proof follows from elementary observations and is provided in Appendix~\ref{sec:d=1}.
\begin{proposition}\label{th:it_is_markovian}
Let $G=\bb Z$ and assume $X_0=0$. Then the Ant RW $(X_n)_{n\geq 0}$ is a Markov chain with transition probabilities given by
\begin{equation}\label{eq:trans1d}
\begin{split}
\bb P\big(X_{n+1}\;=\;\pm 1|X_n=0\big)&\;=\;\frac{1}{2}\,,\\
\bb P\big(X_{n+1}=x+1|X_n=x\big)&\;=\;
\begin{cases} \dfrac{1}{1+e^{-\beta}},\quad \mbox{ if } x\geq 1,\vspace{4pt}\\
 \dfrac{e^{-\beta}}{1+e^{-\beta}},\quad \mbox{ if } x\leq -1, 
\end{cases}\\
\bb P\big(X_{n+1}=x-1|X_n=x\big)&\;=\;\begin{cases} \dfrac{e^{-\beta}}{1+e^{-\beta}},\quad \mbox{ if } x\geq 1,\vspace{4pt}\\ 
\dfrac{1}{1+e^{-\beta}},\quad \mbox{ if } x\leq -1. \end{cases}
\end{split}
\end{equation}
In particular, the Ant RW on $\bb Z$ is transient and satisfies the following law of large numbers:
\begin{align}\label{lln}
\lim_{n\to \infty} \frac{X_n}{n}\;=\; Y\quad \text{a.s.}
\end{align}
where $\bb P\bigg[Y=\pm \Big(\dd\frac{1-e^{-\beta}}{1+e^{-\beta}}\Big)\bigg]=1/2$.
\end{proposition}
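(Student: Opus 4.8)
The plan is to reduce the entire statement to a single deterministic observation about nearest-neighbour paths on $\bb Z$. First I would prove the following combinatorial fact: for any nearest-neighbour path started at $0$ and any time $n$, the crossing number of the edge $\{x,x+1\}$ is completely determined by the current position $X_n$. Concretely, for $x\geq 0$ one has $c_n(x,x+1)=\one[X_n\geq x+1]$, while for $x\leq -1$ one has $c_n(x,x+1)=-\one[X_n\leq x]$. The reason is topological: on $\bb Z$ the only way to change the net number of crossings of a given edge is to traverse that edge, and since the walk is nearest-neighbour and starts at $0$, whether the net crossing count is $0$, $+1$ or $-1$ is dictated solely by which side of the edge the walker currently occupies relative to its starting point. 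I expect this to be the main obstacle; once it is established, the remainder is standard Markov-chain theory.

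Granting this, the outgoing weights $a_n(X_n,\cdot)=\exp\{\beta c_n(X_n,\cdot)\}$ depend on the past only through $X_n$. When $X_n=x\geq 1$ the edge on the right carries crossing number $0$ and the edge on the left carries crossing number $-1$, so the two weights are $1$ and $e^{-\beta}$; the symmetric statement holds for $x\leq -1$, and at $x=0$ both weights equal $1$. Substituting into the transition rule \eqref{eq:transitions} yields at once the Markov property and the explicit probabilities in \eqref{eq:trans1d}. In particular, writing $p:=1/(1+e^{-\beta})$ and $q:=e^{-\beta}/(1+e^{-\beta})$, the chain is an outward-drifting nearest-neighbour walk: away from the origin it moves outward with probability $p>1/2$ and inward with probability $q$.

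For transience I would invoke the classical gambler's-ruin identity for the biased walk: started from $+1$, the chain on $\{0,1,2,\dots\}$ with rightward probability $p>q$ hits $0$ with probability exactly $q/p=e^{-\beta}<1$. Hence each visit to the origin is followed by a return with probability $e^{-\beta}$, so the total number of visits to $0$ is stochastically geometric and thus a.s.\ finite; after the last such visit the walk remains on one half-line and, by the outward drift, diverges to $+\infty$ or $-\infty$. This establishes transience. Since the transition probabilities in \eqref{eq:trans1d} are invariant under the reflection $x\mapsto -x$ and $X_0=0$, the processes $(X_n)$ and $(-X_n)$ have the same law, so the two escape directions each occur with probability $1/2$.

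Finally, for the law of large numbers \eqref{lln} I would work on the event of escape to $+\infty$ (the case $-\infty$ being symmetric). After the a.s.\ finite time of the last visit to the origin the walk remains in $\{1,2,\dots\}$, where the transition probabilities are spatially homogeneous with rightward probability $p$; the strong law of large numbers for this biased nearest-neighbour walk then gives $X_n/n\to p-q$ almost surely on this event. I would make this precise through the first-passage times $T_k=\inf\{n:X_n=k\}$: by the strong Markov property the inter-passage times become i.i.d.\ with finite mean $1/(p-q)$ once the walk has permanently entered the homogeneous region, whence $T_k/k\to 1/(p-q)$ and therefore $X_n/n\to p-q$. Combining the two escape events, each of probability $1/2$, identifies the random limit as $Y=\pm(1-e^{-\beta})/(1+e^{-\beta})$, since $p-q=(1-e^{-\beta})/(1+e^{-\beta})$.
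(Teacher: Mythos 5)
Your proposal is correct and follows essentially the same route as the paper: you establish that the crossing numbers (hence the weights) are a deterministic function of the current position, read off the transition probabilities from \eqref{eq:transitions}, and then deduce transience and the law of large numbers from the behaviour of the asymmetric nearest-neighbour walk on each half-line together with the $x\mapsto -x$ symmetry. The only cosmetic difference is that you phrase the key deterministic fact via the explicit indicator formula $c_n(x,x+1)=\one[X_n\geq x+1]$ (for $x\geq 0$) and quantify the return probability by gambler's ruin, whereas the paper states the equivalent case analysis for the weights, proves it by induction on $n$, and uses an excursion decomposition at successive returns to the origin; these are the same argument in substance.
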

The fact that on $G=\bb Z$ the Ant RW is a Markov chain is due to the specific structure of $\bb Z$. In general the process $(X_n)_{n\in\bb N}$ itself is \textit{not} a Markov chain. However, it is known that $\{\xi_n=(X_n,a_n),\, n\in \bb N\}$ does define one. 
We denote by $\bb P_{\xi}$ the law of this joint process when started from a given configuration $\xi_0=\xi$.

We introduce more notation.
Assume for the moment that $G$ is not a tree, so that in particular it possesses at least one circuit. Here, a circuit $C$ denotes a closed path of distinct \textit{directed} edges and distinct vertices. We will often write $C=(u_0,\ldots,u_{\ell-1})$ to denote a generic circuit $C$ of length $\ell$ with starting point (or root) $u_0$, where $u_i\neq u_j$ if $i\neq j$ and $u_{\ell-1}\sim u_0$.  
We  denote by $\mathscr{C}$ the set of all circuits on $G$.
 
For any $i\in \bb N=\{0,1,2,\ldots\}$ abbreviate $i(\ell)=i \mbox{ mod }\ell$.
We define the \textit{trapping event} associated to the circuit $C=(u_0,\ldots,u_{\ell-1})$ and the time $m\geq 0$ by
\begin{align}\label{eq2.5aa}
T^{C}_m\;=\;\big\{X_{m+i}\;=\;u_{i  (\ell)}\,, \forall\; i\geq 0\big\}\,.
\end{align}
In plain words, $T_m^{C}$ is the event in which the Ant RW is trapped in $C$  at time $m$, and afterwards spins around $C$ forever. We then define  
\begin{align}\label{trappingforever}
T^{C}\;=\; \bigcup_{m\geq 0} T^C_m\,,
\end{align}
which is the event that the Ant RW eventually gets trapped in $C$.
The main result of this paper is the following:
\begin{theorem}\label{thm2.1} Consider the  Ant RW $(X_n)_{n\in \bb N}$ with strength  of reinforcement $\beta\in(0,\infty)$ on an undirected graph   $G$ such that
\begin{enumerate}[a)] 
\item \label{item:finiteG} $G$ is connected, finite and is not a tree, or 
\item $G=\bb Z^d$ with $d\geq 2$.
\end{enumerate}
Then, 
\begin{equation*}
\bb P \Big(\bigcup_{C\in \mathscr{C}} T^C\Big)\;=\;1\,.
\end{equation*}
\end{theorem}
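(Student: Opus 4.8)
The plan is to reduce the whole statement to a single robust estimate: from \emph{any} reachable configuration $\xi$ the probability of eventually getting trapped, $\bb P_\xi\big(\bigcup_{C}T^C\big)$, is strictly positive, and it is bounded below by a \emph{universal} constant whenever the local environment around some circuit has not yet been disturbed. Once this is available, the Markov property of $\xi_n=(X_n,a_n)$ turns ``positive probability at each attempt'' into ``trapping almost surely'' via a geometric-domination argument.

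\textbf{The trapping lemma.} Fix a circuit $C=(u_0,\ldots,u_{\ell-1})$ and suppose the walk sits at $u_0$ in a configuration $\xi$. I would estimate the probability of performing the $(k+1)$-st consecutive loop around $C$ given that the first $k$ loops were performed. During a pure loop only the crossing numbers of the edges of $C$ change: the forward number $c(u_i,u_{i+1})$ increases by one per loop, while every competing weight at $u_i$ (off-circuit neighbours, and the reversed edge $(u_i,u_{i-1})$, whose weight only decreases) stays bounded by a constant $W_i=W_i(\xi)$. Writing $c_i^{(0)}$ for the initial forward numbers, the probability of staying on $C$ at $u_i$ during the $(k+1)$-st loop is therefore at least $1-W_i e^{-\beta(c_i^{(0)}+k)}$, and since $\sum_k\sum_i W_i e^{-\beta(c_i^{(0)}+k)}<\infty$ the product $\prod_{k\ge 0}\prod_i\big(1-W_i e^{-\beta(c_i^{(0)}+k)}\big)$ is strictly positive. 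Together with the strictly positive (as $\beta<\infty$) probability of the first loop and of steering the walk to $u_0$, this gives $\bb P_\xi\big(\bigcup_C T^C\big)>0$ for every $\xi$. Crucially, when every edge incident to $C$ is \emph{fresh} (all crossing numbers zero) the bound $W_i$ depends only on the degree, so the trapping probability exceeds a constant $p_0=p_0(\beta,\ell,\deg)>0$ independent of $\xi$.

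\textbf{From the lemma to almost sure trapping.} It then suffices to exhibit, on the complement of the trapping event, infinitely many stopping times $\tau_1<\tau_2<\cdots$ at which the conditional trapping probability $\bb P_{\xi_{\tau_j}}\big(\bigcup_C T^C\big)$ is bounded below by a fixed $p_*>0$: by the strong Markov property of $\xi_n$, the chance of surviving (not trapping) through the first $K$ of these attempts is at most $(1-p_*)^{K}\to 0$, so trapping occurs a.s., a contradiction. The deterministic input that locates such times is the conservation identity: from~\eqref{eq:edge_weights}, for every vertex $x$ one has $\sum_{y\sim x}c_n(x,y)\in\{-1,0,1\}$, the nonzero values occurring only at $X_0$ and $X_n$; thus $c_n$ is, up to a unit source/sink, a circulation, and any large crossing numbers are necessarily organised along circuits. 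In the finite-graph case I would split according to whether this field stays bounded. If it is bounded, $\xi_n$ takes values in a finite set and hence a.s. returns infinitely often to a single state $\xi^\star$; taking $\tau_j$ the successive returns and $p_*=\bb P_{\xi^\star}\big(\bigcup_C T^C\big)>0$ closes the argument. If the field is unbounded, the conservation identity forces the crossing numbers to blow up along some circuit $C$, and by finiteness of $\mathscr C$ a fixed such $C$ is ``dominant'' at infinitely many times, at which the trapping probability is in fact close to one.

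\textbf{The $\bb Z^d$ case and the main obstacle.} Here I would first rule out escape to infinity: whenever the walk reaches a vertex whose adjacent unit squares are fresh, the universal bound $p_0$ applies, so if the range were infinite the walk would meet infinitely many such fresh vertices, giving infinitely many attempts with $p_*=p_0$ and hence forcing trapping. Therefore the range is a.s. finite, the walk is confined to a random finite connected subgraph, and the finite-graph analysis applies there. The genuinely hard point, and the one requiring the careful deterministic study of the environment, is the unbounded-field alternative: one must show that when the crossing numbers blow up they do so \emph{coherently} along a single circuit on which $X_n$ actually sits at the relevant times --- that the ``preferred exits'' dictated by the dominant weights close up into a circuit rather than drifting --- and, simultaneously, that the competitor weights along that circuit remain controlled relative to the growing forward weights, so that the conditional trapping probabilities are bounded below uniformly in $j$ rather than merely being positive. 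Matching the good circuit to the good times, through the flow structure of $c_n$, is the crux of the proof.
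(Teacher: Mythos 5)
Your overall skeleton is the right one and matches the paper's: a uniform lower bound on the trapping probability around a circuit whose relative weights are controlled (your trapping lemma is essentially Proposition~\ref{lemma:trapped} together with Lemmas~\ref{le:1st-turn} and~\ref{le:kturns}), followed by a strong-Markov/geometric-domination argument at a sequence of stopping times. The gap is in the step you yourself flag as ``the crux'': on the complement of the trapping event you must actually \emph{exhibit} infinitely many stopping times at which the conditional trapping probability is bounded below by a fixed $p_*>0$, and your case analysis does not deliver them. The bounded-field case is fine (and, note, essentially vacuous on the trapping event, since trapping forces the field to blow up), but in the unbounded-field case the flow decomposition of $c_n$ into circuits does not give what you need. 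The decomposition is non-unique, it does not place $X_n$ on the ``dominant'' circuit at the relevant times, and flow conservation at a vertex $u$ is perfectly compatible with \emph{two} outgoing edges at $u$ having large, comparable positive crossing numbers (balanced by very negative incoming ones), in which case the conditional probability of following any one \emph{prescribed} circuit is not close to one. So the assertion that ``a fixed such $C$ is dominant at infinitely many times, at which the trapping probability is close to one'' is exactly the unproved step, not a consequence of the conservation identity.

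The paper's resolution avoids the bounded/unbounded dichotomy altogether: at each renewal time it builds a deterministic target path that follows \emph{good edges} (edges maximising $c_n(u,\cdot)$ at the current vertex), and the key deterministic input is Proposition~\ref{prop:good>1} --- if some edge at $u$ has crossing number $\le -2$, then every good edge at $u$ has crossing number $\ge 1$ --- which by induction makes the good path non-backtracking, hence forces it to close \emph{some} circuit $C$ (not one chosen in advance) with $R^C_\sigma\ge 0$, to which the uniform bound of Proposition~\ref{lemma:trapped} applies. This is the device that ``matches the circuit to the times'' and that your proposal is missing; without it, or an equivalent, the finite-graph argument does not close. Two smaller points: (i) your $\bb Z^d$ escape-ruling argument needs the circuits to sit in genuinely fresh territory, which the paper arranges via the annuli $B_{3(k+1)}\setminus B_{3k}$ at the exit times of $B_{3k}$ (a vertex being new does not make its adjacent unit squares edge-disjoint from the past range); (ii) ``the walk is confined to a finite subgraph, so the finite-graph analysis applies'' is not automatic, because the Ant RW on $\bb Z^d$ restricted to its range is not the Ant RW on that subgraph (the denominators in~\eqref{eq:transitions} still count all $\bb Z^d$-neighbours); the paper bridges this with an explicit coupling between the walk on $\bb Z^d$ and the walks on the balls $B_k$.
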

In words, under the above assumptions, the Ant RW will almost surely be eventually trapped  in some circuit $C$. Observe that, differently to random polymers or the Ising model, there is no phase transition in the parameter $\beta\in (0,\infty)$ and, differently to the usual symmetric random walk, the phase transition in the dimension occurs from $d=1$ to $d=2$.  To keep notation light, we simply assume that $\beta=1$ throughout the proofs, except in the proof of Proposition~\ref{th:it_is_markovian}. Going carefully over our proof it is however not hard to show that all results remain in force for any  $\beta\in (0,\infty)$.

Moreover, the proof of item b) of Theorem~\ref{thm2.1} can be easily adapted to different lattices. This is explained in Remark~\ref{lattice} where we point out which property a lattice must have in order to exhibit  the same behaviour as~$\bb Z^d$, $d\geq 2$, with respect to the Ant RW.

 \subsection{Idea of the proof for the finite case}
 \label{sec:idea-proof}
 
The central novelty  of this article is Theorem~\ref{thm2.1} for which we shortly explain the idea of its proof in the finite graph case.

To explain the idea of the proof we introduce our key concept, the \textit{good edge}. Let $v\in G$ and assume that $X_n=v.$ The probability of the walker to follow the edge $(v,w)$ is given by \eqref{eq:transitions} which can be rewritten as
\begin{equation}\label{eq:vw}
 \frac{1}{1+\!\!\!\displaystyle \sum_{\substack{u\,:\,u\sim v,\\ u\neq w}}\exp\big\{c_{n}(v,u)- c_{n}(v,w)\big\}}\,.
\end{equation} 
The main observation is that if $(v,w)$ is good in the sense that $c_n(v,w)$ maximises $c_n(v,u)$ over  $u\sim v,$ then \eqref{eq:vw} is bounded from below by $1/(1+D)$, where $D$ is the maximal degree of the graph $G$. In particular this bound is uniform in the environment. The major work in the proof of Theorem~\ref{trappingforever} is then to assure that the probability that the walker follows forever only good edges is bounded from below uniformly in the environment.

A renewal argument will then show that eventually this event will happen with probability one.
Since the graph is finite, a path consisting solely of good edges will eventually close a circuit, which can be shown to be followed forever.

\subsection{Open problems}
Theorem~\ref{thm2.1} gives a quite in depth description for the Ant RW on finite graphs. However, there are still many challenges left open, some of them which we plan to address in future works. We mention some of them:

$\bullet$   The weights in this article depend exponentially on the crossing numbers. It would be interesting to investigate the case in which the dependence is only of polynomial form. That is, for some $\gamma>0$, the environment $a_n$ is given via 
\begin{align*}
a_n(x,y)\;=\;\begin{cases} c_n(x,y)^\gamma, &\mbox{ if }\; c_n(x,y)>0\,,\\
1,&\mbox{ if }\; c_n(x,y)=0\,,\\
(-c_n(x,y))^{-\gamma}, &\mbox{ if }\; c_n(x,y)<0\,.
\end{cases}
\end{align*}
Does Theorem~\ref{thm2.1} still hold true? Is there maybe a phase transition in $\gamma$, in the sense that there exists $\gamma_*$ such that for $\gamma <\gamma_*$ the random walk does not necessarily get stuck in a circuit but for $\gamma>\gamma_*$ it does? If this is the case, does $\gamma_*$ depend on the choice of the graph, or is it maybe universal? We expect that to answer these questions the understanding of the role of the  environment on the dynamics employed in this article could be used. This however will not be enough. Indeed, one feature that is crucial to our analysis and to which the notion of good edge is well adjusted is that for any pair of edges $(x_1,y_1), (x_2,y_2)$ one has the relation
\begin{equation*}
\frac{a_n(x_1,y_1)}{a_n(x_2,y_2)}\;=\; \exp\big\{\beta[c_n(x_1,y_1)-c_n(x_2,y_2)]\big\}\,,
\end{equation*} 
which fails to be true in the polynomial case. In particular it is no longer enough to follow only good edges. In the polynomial case the lower bound will fail to be uniform and our arguments can not be applied directly.

$\bullet$  This work is mainly concerned with the Ant RW on finite graphs and on $\bb Z^d$. However, the behaviour of the walk on general infinite graphs can be very different, and can depend in a sensitive manner on the structure of the underlying graph. For instance, for the graph of Figure~\ref{fig1}, composed by   a circuit  connected to a copy of the infinite half line (we will call an infinite half line an \textit{infinite leaf}), the statement of Theorem~\ref{thm2.1} is not true.
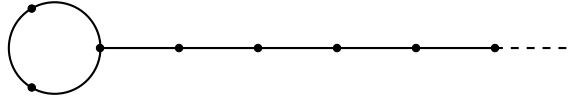
\begin{figure}[!htb]
\centering
\begin{tikzpicture}
\begin{scope}[scale=0.7]
\begin{scope}[scale=0.5]
\coordinate (a) at (0,0);
\coordinate (b) at (0,3);
\coordinate (c) at (1.5*1.73,1.5);
\coordinate (d) at (1.5*1.73+3,1.5);
\coordinate (e) at (1.5*1.73+6,1.5);
\coordinate (f) at (1.5*1.73+9,1.5);
\coordinate (g) at (1.5*1.73+12,1.5);
\coordinate (h) at (1.5*1.73+15,1.5);
\coordinate (i) at (1.5*1.73+17.9,1.5);
\coordinate (ccc) at (0.87,1.5);
\end{scope}

\draw[thick] (ccc) circle (0.87cm);
\draw[thick] (c)-- (d)--(e)--(f)--(g)--(h);
\draw[thick,dashed] (h)-- (i);
\filldraw[fill=black, draw=black] (a) circle (2pt)node[above]{};
\filldraw[fill=black, draw=black] (b) circle (2pt)node[right]{};
\filldraw[fill=black, draw=black] (c) circle (2pt)node[below]{};
\filldraw[fill=black, draw=black] (d) circle (2pt)node[below]{};
\filldraw[fill=black, draw=black] (e) circle (2pt)node[below]{};
\filldraw[fill=black, draw=black] (f) circle (2pt)node[below]{};
\filldraw[fill=black, draw=black] (g) circle (2pt)node[below]{};
\filldraw[fill=black, draw=black] (h) circle (2pt)node[below]{};

\end{scope}
\end{tikzpicture}
\caption{Graph $G$ given by a triangle connected to an infinite leaf.}
\label{fig1}
\end{figure}
In fact, using the same reasoning as in the proof of Proposition~\ref{th:it_is_markovian}, one can show that when walking over the infinite leaf the Ant RW behaves as an asymmetric random walk. In particular, it has positive probability of never returning to the root of the infinite leaf. Hence, the probability of not getting trapped in any circuit is  positive. More generally, any connected graph which is not a tree and possesses  an infinite leaf may serve as  example as well.
The presence of an infinite leaf is sufficient to assure  that, with positive probability, the Ant RW is not trapped in any  circuit, but we believe that it should be not necessary. Hand-waving calculations guided us to guess that ``\textit{an infinite tree whose nodes at even generations are replaced by circuits with a sufficiently large number of branches leaving from it}''  should be such a corresponding example  (see Figure~\ref{fig1b} for an illustration).
\begin{figure}[htb!]
\centering
\begin{tikzpicture}
\begin{scope}[scale=0.7]

\coordinate (a) at (0,0);
\coordinate (b) at (-0.5,0);
\coordinate (c) at (0.5,0);
\coordinate (e) at (2.5,-2);

\coordinate (d) at (-2.5,-2);
\coordinate (f) at (-3.5,-3);
\coordinate (g) at (-1.5,-3);
\coordinate (h) at (-3.5,-3.5);
\coordinate (h1) at (-4,-3.5);
\coordinate (h2) at (-3,-3.5);
\coordinate (h3) at (-3.5,-4);
\coordinate (g0) at (-1.5,-3.5);
\coordinate (g1) at (-2,-3.5);
\coordinate (g2) at (-1,-3.5);
\coordinate (g3) at (-1.5,-4);

\coordinate (h4) at (-3.5,-6);
\coordinate (A) at (-6.5,-7);
\coordinate (B) at (-4.5,-7);
\coordinate (C) at (-2.5,-7);
\coordinate (D) at (-0.5,-7);

\centerarc[thick,dashed](-6.5,-7.5)(10:170:0.5);
\centerarc[thick,dashed](-4.5,-7.5)(10:170:0.5);
\centerarc[thick,dashed](-2.5,-7.5)(10:170:0.5);

\begin{scope}[xshift=4.5cm]
\coordinate (A0) at (-5,-7.5);
\coordinate (A1) at (-5.5,-7.5);
\coordinate (A2) at (-4.5,-7.5);
\coordinate (A3) at (-5,-8);
\coordinate (B1) at (-5+0.35,-7.5+0.35);
\coordinate (B2) at (-5-0.35,-7.5+0.35);
\coordinate (B3) at (-5+0.35,-7.5-0.35);
\coordinate (B4) at (-5-0.35,-7.5-0.35);
\end{scope}

\coordinate (Bd) at (2.5,-2);
\coordinate (Bf) at (1.5,-3);
\coordinate (Bg) at (3.5,-3);
\coordinate (Bh) at (1.5,-3.5);
\coordinate (Bh1) at (1,-3.5);
\coordinate (Bh2) at (2,-3.5);
\coordinate (Bh3) at (1.5,-4);
\coordinate (Bg0) at (3.5,-3.5);
\coordinate (Bg1) at (3,-3.5);
\coordinate (Bg2) at (4,-3.5);
\coordinate (Bg3) at (3.5,-4);

\draw[thick] (a) circle (0.5cm);
\draw[thick] (b)--(d);
\draw[thick] (c)--(e);
\draw[thick] (d)--(f);
\draw[thick] (d)--(g);
\draw[thick] (h3)--(h4);
\draw[thick] (h) circle (0.5cm);
\draw[thick] (g0) circle (0.5cm);

\draw[thick] (h4)--(A);
\draw[thick] (h4)--(B);
\draw[thick] (h4)--(C);
\draw[thick] (h4)--(D);

\draw[thick] (A0) circle (0.5cm);

\filldraw[fill=black, draw=black] (b) circle (2pt)node[above]{};
\filldraw[fill=black, draw=black] (c) circle (2pt)node[above]{};
\filldraw[fill=black, draw=black] (d) circle (2pt)node[above]{};
\filldraw[fill=black, draw=black] (f) circle (2pt)node[above]{};
\filldraw[fill=black, draw=black] (g) circle (2pt)node[above]{};

\filldraw[fill=black, draw=black] (h1) circle (2pt)node[above]{};
\filldraw[fill=black, draw=black] (h2) circle (2pt)node[above]{};
\filldraw[fill=black, draw=black] (h3) circle (2pt)node[above]{};
\filldraw[fill=black, draw=black] (h4) circle (2pt)node[above]{};

\filldraw[fill=black, draw=black] (g1) circle (2pt)node[above]{};
\filldraw[fill=black, draw=black] (g2) circle (2pt)node[above]{};
\filldraw[fill=black, draw=black] (g3) circle (2pt)node[above]{};

\filldraw[fill=black, draw=black] (A) circle (2pt)node[above]{};
\filldraw[fill=black, draw=black] (B) circle (2pt)node[above]{};
\filldraw[fill=black, draw=black] (C) circle (2pt)node[above]{};
\filldraw[fill=black, draw=black] (D) circle (2pt)node[above]{};

\filldraw[fill=black, draw=black] (A1) circle (2pt)node[above]{};
\filldraw[fill=black, draw=black] (A2) circle (2pt)node[above]{};
\filldraw[fill=black, draw=black] (A3) circle (2pt)node[above]{};

\filldraw[fill=black, draw=black] (B1) circle (2pt)node[above]{};
\filldraw[fill=black, draw=black] (B2) circle (2pt)node[above]{};
\filldraw[fill=black, draw=black] (B3) circle (2pt)node[above]{};
\filldraw[fill=black, draw=black] (B4) circle (2pt)node[above]{};

\draw[thick] (c)--(Bd);
\draw[thick] (Bd)--(Bf);
\draw[thick] (Bd)--(Bg);
\draw[thick] (Bh) circle (0.5cm);
\draw[thick] (Bg0) circle (0.5cm);

\filldraw[fill=black, draw=black] (Bd) circle (2pt)node[above]{};
\filldraw[fill=black, draw=black] (Bf) circle (2pt)node[above]{};
\filldraw[fill=black, draw=black] (Bg) circle (2pt)node[above]{};

\filldraw[fill=black, draw=black] (Bh1) circle (2pt)node[above]{};
\filldraw[fill=black, draw=black] (Bh2) circle (2pt)node[above]{};
\filldraw[fill=black, draw=black] (Bh3) circle (2pt)node[above]{};

\filldraw[fill=black, draw=black] (Bg1) circle (2pt)node[above]{};
\filldraw[fill=black, draw=black] (Bg2) circle (2pt)node[above]{};
\filldraw[fill=black, draw=black] (Bg3) circle (2pt)node[above]{};

\draw[dashed] (h2)--(-3,-4.5);
\draw[dashed] (h1)--(-4,-4.5);

\draw[dashed] (g3)--(-1.5,-5);
\draw[dashed] (g2)--(-1,-4.5);
\draw[dashed] (g1)--(-2,-4.5);

\draw[dashed] (Bh3)--(1.5,-5);
\draw[dashed] (Bh1)--(1,-4.5);
\draw[dashed] (Bh2)--(2,-4.5);

\draw[dashed] (Bg3)--(3.5,-5);
\draw[dashed] (Bg1)--(3,-4.5);
\draw[dashed] (Bg2)--(4,-4.5);

\end{scope}
\end{tikzpicture}
\caption{Infinite graph $G$ which is not a tree, has no infinite leaf, and for which we believe  the Ant RW has  positive probability of not getting trapped in any circuit. Since the number of branches as well as  circuit lengths increase exponentially  as we step forward to next generations, we believe that the probability of never going back to a previous generations and also never closing a single circuit is positive.}
\label{fig1b}
\end{figure}
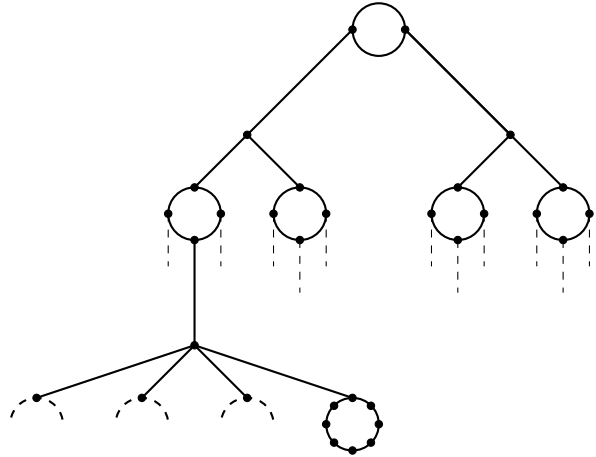
In light of the above discussion and  Theorem~\ref{thm2.1}, we also conjecture:
 \begin{conjecture}
 Let $G$ be an infinite graph. Then, denoting by $d(\cdot,\cdot)$ the shortest path distance on $G$, one has the following dichotomy
 \begin{equation*}
\bb P \Big(\bigcup_{C\in \mathscr{C}} T^C \cup\big\{\lim_{n\to\infty}d(X_n, X_0)= \infty\big\}\Big)\;=\;1\,,
 \end{equation*}
 i.e., either the walk gets trapped in a circuit or it escapes to infinity.
 \end{conjecture}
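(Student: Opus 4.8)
Since $G$ is locally finite, the complement of $\{\lim_n d(X_n,X_0)=\infty\}$ is exactly $\{\liminf_n d(X_n,X_0)<\infty\}$, and on the latter some finite ball around $X_0$, hence some single vertex, is visited infinitely often. Writing $V_\infty$ for the set of vertices visited infinitely often, one checks that $\{\liminf_n d(X_n,X_0)<\infty\}=\{V_\infty\neq\emptyset\}$. The conjecture is thus equivalent to the implication
\[
V_\infty\neq\emptyset\ \Longrightarrow\ \text{the walk is trapped in some circuit, almost surely.}
\]
I would prove this by isolating the mechanism that the recurrent part of the trajectory must contain a circuit, and that a recurrent circuit forces trapping.

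The first ingredient is a \emph{local trapping lemma}: if a vertex $v$ lying on a circuit of $G$ is visited infinitely often, then the walk is trapped almost surely. Its proof transports the finite-graph scheme of Theorem~\ref{thm2.1}(a) to a neighbourhood of $v$. The uniform good edge bound \eqref{eq:vw} gives, at any vertex and regardless of the environment, probability at least $1/(1+D)$ of following an edge maximising the outgoing crossing number, $D$ being the local maximal degree. One then needs a configuration-uniform lower bound $\delta>0$ for the event that, starting from $v$, the walk follows good edges, closes a circuit of length at least three through the neighbourhood of $v$, and spins around it forever; the reinforcement estimate (per-loop failure probabilities are summable because forward crossing numbers along the closed circuit increase by one per loop) is what makes the ``forever'' part uniformly positive. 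Granting such a $\delta$, Lévy's $0$--$1$ law applied along the almost surely infinite sequence of return times to $v$ yields trapping with probability one on $\{v\text{ visited infinitely often}\}$, exactly as in the finite case.

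The second ingredient addresses the case where no circuit vertex is visited infinitely often, so that the subgraph spanned by the infinitely-crossed edges is a forest. On such a forest the dynamics should be transient: arguing as in Proposition~\ref{th:it_is_markovian} and in the infinite-leaf discussion following Figure~\ref{fig1}, along any tree edge the crossing number cannot be neutralised by a competing circuit, so the walk acquires a strict directional drift and, as on $\bb Z$, leaves every finite tree piece. This drift is incompatible with a vertex being visited infinitely often, and would therefore force $\liminf_n d(X_n,X_0)=\infty$, contradicting $V_\infty\neq\emptyset$. Consequently, on $\{V_\infty\neq\emptyset\}$ the recurrent part must contain a circuit, and the local trapping lemma closes the argument.

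Two points are where I expect the real difficulty to concentrate, and they explain why the statement is only conjectured. First, the configuration-uniform bound $\delta$ in the local lemma is genuinely at the level of the $\bb Z^d$ argument rather than the finite one: on an infinite graph a good-edge path need not close a circuit at all (a straight line of freshly crossed edges is a legitimate good-edge path), so the uniform lower bound must instead exploit the precise structure of the crossing-number field, which is the net flow of the path and hence carries a source at $X_0$ and a sink at the current position. Second, and more seriously, when the walk makes unbounded excursions that nevertheless return near $X_0$ infinitely often --- the behaviour one expects on graphs such as that of Figure~\ref{fig1b}, where circuit lengths and branching numbers grow across generations --- one must quantify the competition between committing to a circuit at a given generation and drifting outward forever. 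Turning the heuristic trade-off into a uniform, multiscale renewal estimate over all admissible environments is the main obstacle to a complete proof.
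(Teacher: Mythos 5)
The statement you are addressing is presented in the paper as an open \emph{conjecture}: the authors give no proof, only the heuristic discussion around Figures~\ref{fig1} and~\ref{fig1b} explaining why their finite-graph and $\bb Z^d$ arguments do not extend. Your submission is, by your own account, a strategy rather than a proof, so it cannot be checked against anything in the paper; what I can do is assess whether the strategy is sound. Your reduction is correct: by local finiteness, failure of $d(X_n,X_0)\to\infty$ forces some vertex to be visited infinitely often, so the conjecture amounts to showing that $V_\infty\neq\emptyset$ implies trapping. You also correctly locate the first obstruction: the configuration-uniform lower bound $\delta$ of Lemma~\ref{lemma:uniform-bound} is obtained in the paper by following good edges, and on a finite graph a non-backtracking good-edge path must close a circuit, whereas on an infinite graph it may run off to infinity. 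The alternative of forcing the walk around a \emph{prescribed} circuit through $v$ fails for a reason worth making explicit: at successive returns to $v$ the quantity $R_n^C$ of \eqref{eq:RnC} can be arbitrarily negative, the per-return trapping probabilities from Proposition~\ref{lemma:trapped} can then be summable, and Borel--Cantelli no longer yields trapping. So your ``local trapping lemma'' is itself at the level of the conjecture, not a lemma one can grant.

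The second ingredient contains a concrete error beyond the gaps you acknowledge. You argue that if no infinitely-visited vertex lies on a circuit, the infinitely-crossed edges span a forest on which the walk has a strict directional drift ``as on $\bb Z$,'' because ``the crossing number cannot be neutralised by a competing circuit.'' The $\bb Z$ argument in Appendix~\ref{sec:d=1} rests on the fact that every edge of a tree is a bridge, so its net crossing number is confined to $\{-1,0,1\}$ and the sign of the weight is determined by the walker's position. An edge whose endpoints happen to induce a forest inside $V_\infty$ need not be a bridge of $G$: the walk can leave through that edge and return through circuits elsewhere in the graph, driving its crossing number to any integer value of either sign. Hence no drift is forced, and the claimed contradiction with $V_\infty\neq\emptyset$ does not follow. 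Since both ingredients are unproved and the second is based on a false analogy, the proposal should be read as a (partially flawed) research plan for an open problem, not as a proof.
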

 
$\bullet$ The Ant Mill phenomenon alluded to above is observed in a \textit{group} of army ants. Thus, it would actually be more natural to study the behaviour of a large number of Ant RWs.  In this case there will be two effects that are competing with each other. On the one hand if an ant follows an edge already crossed before by another ant it further reinforces the edge, so that it should be easier for a large group of ants to be trapped in a circuit. However, as long as the reinforcement is not yet strong enough an ant may also simply cross a directed edge in the opposite direction and in this way kill the reinforcement effect and ``neutralize'' the edge. It would be interesting to investigate the localisation behaviour in the case of a group of Ant RWs.

\section{Proof of Theorem~\ref{thm2.1} in the finite graph case}\label{sec:3}

To prove Theorem~\ref{thm2.1} in the finite graph case we will show that eventually the walk only follows good edges, i.e., edges that maximise their crossing numbers among adjacent edges. By the explanation given in Section~\ref{sec:idea-proof} it is tempting to impose that the path only follows good edges. However, it may be possible that  the path ends up in a leaf and then is stuck forever on the edge adjacent to that leaf. In other words one must guarantee that the walker never backtracks, i.e., never goes back to a vertex visited immediately before. To that end a certain structure on the good edges needs to be required. We now formalise these ideas.

\begin{definition} Let $(X_n)_{n\in\bb N}$ be  the Ant Random Walk on $G$. Given a vertex $u\in G$ we say that the edge $(u,v)\in E$ is a good edge for $u$ at time $n$ if
\[c_n(u,v)\;=\;\max_{w\,:\,w\sim u}c_n(u,w)\,.\]
\end{definition}

\ignore{
\begin{remark}\rm
The above definition of course implies that to each vertex $u\in V$ there exists at least one corresponding good edge. Note that
if $(u,v)$ is a good edge for $u$ then
\[\min_{\substack{w\,:\,w\sim u,\\ w\neq v}}\big[c_n(u,v)-c_n(u,w)\big]\;\geq\; 0\,,\]
which in view of~\eqref{eq:RnC} will help to construct a \textit{good cycle}, that is, a cycle composed of good edges.
\end{remark}
}

Since $G$ is connected for any non-empty subset $S\subset G$ and any vertex $v\in G$ there is a path connecting $v$ to any vertex of $S$. We denote by $v\to S$ a shortest such path connecting $v$ to $S$. 
With a slight abuse of notation we also denote by $\{X_n \to S\}$ the event that the Ant RW walks along a shortest path connecting the random vertex $X_n$ with $S$.
We further fix a circuit $C^*$ of $G$.
For any finite stopping time $\tau$ we define the following random set
\begin{equation}\label{def:Stau}
S_\tau\;=\;\big\{u\in V:\exists\; v\sim u \mbox{ such that }|c_\tau(u,v)|\geq 2\big\}\,.
\end{equation}

Now we define an auxiliary random path $(Y^\tau_n)_{n\geq 0}$ that is a deterministic function of the pair $(S_\tau,X_\tau).$ We start with  $Y^\tau_0=X_\tau$ and $(c^\tau_0(\cdot,\cdot))=(c_\tau(\cdot,\cdot)).$
The evolution of the field $(c^\tau_n(\cdot,\cdot))_n$ will obey the same rules as in \eqref{eq:edge_weights} with $Y$ instead of $X$. We distinguish the following cases.

\begin{itemize}
	\item[1)] $S_\tau\neq \varnothing,$ and $X_\tau \in S_\tau$. The construction of $(Y^\tau_n)_n$ goes as follows. Assume that for some $j\geq 1$,  $Y_0^\tau, Y_1^\tau,\ldots, Y_{j-1}^\tau$ have already been constructed. We then choose $Y_j^\tau$ such that the edge $(Y_{j-1}^\tau, Y_j^\tau)$ is good. We remark   that we will show in Section~\ref{sec:nonback} that $Y_j^\tau\neq Y_{j-2}^\tau$. Moreover, since $G$ is finite $(Y_n^\tau)_{n}$ eventually will visit a vertex for the second time and thereafter will follow forever a circuit composed of good edges. 
	\item[2)] $S_\tau\neq \varnothing$, and $X_\tau \notin S_\tau$.  In that case $(Y^\tau_n)_n$ first follows the path $X_\tau\to S_\tau$. Having reached $S_\tau$ it copies the strategy from the first item, and therefore will eventually follow a circuit of good edges forever.
	\item[3)] $S_\tau=\varnothing$, and $X_\tau \notin C^*$. In that case $(Y^\tau_n)_n$ follows $X_\tau\to C^*$ and after that gives infinite turns around $C^*.$ 
	\item[4)] $S_\tau=\varnothing$, and $X_\tau\in C^*$. Then $(Y^\tau_n)_n$ just gives infinite turns around $C^*.$
\end{itemize}
Note that in all four cases above, the path $(Y_n^\tau)_n$ will eventually follow a circuit of good edges.

Given the above construction we then define recursively the following sequence of stopping times: $\tau_0=0$, and 
\[\tau_{k+1}\;=\;\inf\{n>\tau_k \,:\, X_n\neq Y^{\tau_k}_{n-\tau_k}\}\,.\]
Where we use the convention that $\tau_{k+1}=\infty$ if $\tau_k=\infty$.

The crucial observation is that on the event $\{\exists\, k\geq 1\,:\,\tau_k=\infty\}$ the walker $(X_n)_{n\geq 1}$ will eventually be trapped in a circuit.

\begin{lemma}\label{lemma:uniform-bound} There exists a constant $\delta=\delta(G)>0$ such that almost surely for any $k\in\bb N$ 
\[\bb P_{\xi_{\tau_k}}(\tau_{k+1}=\infty )\;\geq\; \delta\quad \mbox{ on the event $\{\tau_k<\infty\}$}\,.\]
\end{lemma}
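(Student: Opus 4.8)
The plan is to read off from the definition of $\tau_{k+1}$ that $\{\tau_{k+1}=\infty\}=\{X_{\tau_k+n}=Y^{\tau_k}_n \text{ for all } n\geq 0\}$, the event that the walk follows the auxiliary path forever. Since $\xi_n=(X_n,a_n)$ is Markov and $Y^{\tau_k}$ is a deterministic function of $\xi_{\tau_k}$ (the weights $a_{\tau_k}$ encode the crossing numbers $c_{\tau_k}$ and hence $S_{\tau_k}$ through \eqref{def:Stau}), I would condition on $\xi_{\tau_k}$ and expand this probability as an infinite product $\prod_{n\geq 0}p_n$ of one-step transition probabilities. The key simplification is that, as long as $X$ agrees with $Y^{\tau_k}$, the environment seen by $X$ is exactly the deterministic field $c^{\tau_k}$ built into the construction; hence $p_n$ equals the one-step probability \eqref{eq:transitions} evaluated along $Y^{\tau_k}$, for which the rewriting \eqref{eq:vw} is available. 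It therefore suffices to bound $\prod_n p_n$ below by a constant $\delta(G)>0$ uniformly in $\xi_{\tau_k}$.

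I would split $Y^{\tau_k}$ into a finite preamble followed by a circuit of good edges traversed forever, and estimate the two parts separately. The preamble has length at most $\mathrm{diam}(G)+|V|$, uniformly in $k$: a routing segment ($X_{\tau_k}\to S_{\tau_k}$ or $X_{\tau_k}\to C^*$) is a shortest path, and the good-edge segment must close a circuit within $|V|$ steps. Each preamble factor admits a uniform positive lower bound: along good edges \eqref{eq:vw} is at least $1/(1+D)$, as observed in Section~\ref{sec:idea-proof}, while along a routing segment one sits at vertices outside $S_{\tau_k}$, where by \eqref{def:Stau} every adjacent crossing number lies in $\{-1,0,1\}$; a short bookkeeping of which edges the routing has touched shows the competing exponents in \eqref{eq:vw} are at most $2$, giving the bound $1/(1+(D-1)e^{2})$. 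Hence the preamble contributes a factor depending only on $G$.

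The heart of the argument is the circuit phase. Once $Y^{\tau_k}$ loops around a circuit $C=(u_0,\dots,u_{\ell-1})$ of good edges, each full turn raises the net crossing number $c(u_i,u_{i+1})$ of every circuit edge by one, while each competing quantity $c(u_i,w)$ with $w\neq u_{i+1}$ stays fixed (off-circuit neighbours and chords are never traversed during a turn) or decreases (the reversed edge, $w=u_{i-1}$). Thus the gap $c(u_i,u_{i+1})-c(u_i,w)$, which is nonnegative when the circuit first closes because that edge is good, grows at least linearly in the number $m$ of completed turns. Inserting this into \eqref{eq:vw} bounds the one-step probability at turn $m$ below by $1-(D-1)e^{-m}$, up to an $O(1)$ shift in $m$. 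I would then estimate the probability of looping forever by $\prod_{m\geq 0}(1-(D-1)e^{-m})^{\ell}$: the finitely many turns $m<m_0$, with $m_0$ fixed so that $(D-1)e^{-m_0}\leq \tfrac12$, are handled by the uniform good-edge bound $1/(1+D)$, while the tail $\prod_{m\geq m_0}(1-(D-1)e^{-m})^{\ell}$ converges to a strictly positive limit since $\sum_m(D-1)e^{-m}<\infty$. Taking $\delta$ to be the product of the preamble and circuit contributions yields a constant depending only on $G$, and, because every estimate used is uniform in the actual values of the crossing numbers, the bound is uniform in $\xi_{\tau_k}$ on $\{\tau_k<\infty\}$.

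I expect the main obstacle to be the circuit phase, namely establishing that repeated traversal of a good circuit genuinely and uniformly reinforces it, so that the infinite product stays away from zero. This is exactly where the net-crossing structure of \eqref{eq:edge_weights} is essential — a competitor edge cannot keep pace with the reinforced circuit edge — together with the non-backtracking property $Y_j^{\tau}\neq Y_{j-2}^{\tau}$ from Section~\ref{sec:nonback}, which guarantees that the auxiliary path does not immediately reverse an edge (thereby cancelling the reinforcement) and that the circuit is traversed exactly once per turn. A secondary point to check carefully is the uniform length bound and the edge-by-edge lower bounds for the preamble across the four cases of the construction.
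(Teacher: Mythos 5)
Your proposal is correct and follows essentially the same route as the paper: reduce $\{\tau_{k+1}=\infty\}$ to the event of following the deterministic path $Y^{\tau_k}$ forever, bound the finite preamble factor by factor using that crossing numbers are at most $1$ in modulus off $S_{\tau_k}$ (respectively that good edges give the uniform $1/(1+D)$ bound), and control the circuit phase by the linear growth of the reinforcement gap per completed turn. The paper packages this last step as Proposition~\ref{lemma:trapped} via the quantity $R_n^C$ of~\eqref{eq:RnC} and an induction on the number of turns, which is exactly your convergent infinite product $\prod_m(1-(D-1)e^{-m})^\ell$.
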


We prove Lemma~\ref{lemma:uniform-bound} in Section \ref{sec:uniform-bound}. We now show how to deduce the first item in Theorem~\ref{thm2.1} from Lemma~\ref{lemma:uniform-bound}. 

Our goal is to prove that for an initial state $\xi_0=(X_0,c_0(\cdot,\cdot)),$ with $c_0(\cdot,\cdot)\equiv 0,$ one has that $\bb P_{\xi_0}(\exists\, k\geq 1\,:\,\tau_k=\infty)=1.$ By the Borel-Cantelli Lemma it is enough to show that
\begin{equation}\label{eq:finite-mean}
\sum_{k\geq 1}\bb P_{\xi_0}(\tau_k<\infty)\;<\;\infty\,.
\end{equation}
Note that by Lemma~\ref{lemma:uniform-bound} applied to $k=0$,
\[\bb P_{\xi_0}(\tau_1<\infty)\;=\;1-\bb P_{\xi_0}(\tau_1=\infty)\;\leq\; 1-\delta\,.\]
 Assume that \[\bb P_{\xi_0}(\tau_k<\infty)\leq (1-\delta)^k.\] Using the Markov property and again Lemma \ref{lemma:uniform-bound} it follows that
\begin{align*}
\bb P_{\xi_0}(\tau_{k+1}<\infty)&\;=\;\bb P_{\xi_0}(\tau_{k+1}<\infty,\,\tau_k<\infty)\\
&\;=\;\bb E_{\xi_0}\big[\one \{\tau_k<\infty\} \bb P_{\xi_{\tau_k}}(\tau_{k+1}<\infty)\big]\\
&\;\leq\; (1-\delta)^{k+1}
\end{align*}
and this concludes the proof of \eqref{eq:finite-mean}.

\subsection{Non backtracking property}
\label{sec:nonback}
The goal of this section is to prove that with certain control on the environment the strategy of  following only good edges generates a non-backtracking path. To be more precise, we want to show that the path $(Y_n^\tau)_n$ constructed in the first item in the previous section does not make steps back to a vertex visited one time unit before. It will be expedient to study some flow properties of the crossing numbers.
\begin{definition} The total flow at time $n$ through the vertex $u\in G$ is defined by the quantity
\[F_n(u)\;=\;\sum_{v\,:\,v\sim u}c_n(u,v)\,.\]
\end{definition}
The next result is a simple fact about the flow of the random walk on a graph, which is well known in the field, and so we omit its proof.
\begin{lemma}\label{lema:flow} In the previous setting, fix a vertex $u\in G.$ Then $F_n(u)\in$\break$\{-1,0, +1\}$.
Furthermore, either $F_n(u)=0$ for any $u\in G$ or there exist exactly two vertices $v_{i}$ and $v_f$, called the initial and the final vertex, respectively, such that $F_n(v_{i})=+1$ and  $F_n(v_{f})=-1$ and $F_n(u)=0$ for the remaining vertices. 
\end{lemma}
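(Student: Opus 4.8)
The plan is to observe that the flow $F_n(u)$ telescopes once we expand the crossing numbers via \eqref{eq:edge_weights}. First I would write
\[
F_n(u)\;=\;\sum_{v\,:\,v\sim u}\sum_{k=0}^{n-1}\Big(\one\big[(X_k,X_{k+1})=(u,v)\big]-\one\big[(X_k,X_{k+1})=(v,u)\big]\Big)
\]
and interchange the two finite sums. For each fixed $k$, summing the first indicator over all neighbours $v$ of $u$ yields $\one[X_k=u]$, since the walk leaves $u$ at step $k$ precisely when $X_k=u$ (and then $X_{k+1}$ is automatically some neighbour). Summing the second indicator over $v$ yields $\one[X_{k+1}=u]$, since the walk arrives at $u$ at step $k$ precisely when $X_{k+1}=u$ (and then $X_k$ is automatically a neighbour). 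The only point deserving a moment of care is this second sum: one must check that ranging $v$ over all neighbours of $u$ captures every way of landing on $u$, which holds because $X_{k+1}=u$ forces $X_k\sim u$, so no configurations are lost.

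With these two identities the flow reduces to a telescoping sum,
\[
F_n(u)\;=\;\sum_{k=0}^{n-1}\big(\one[X_k=u]-\one[X_{k+1}=u]\big)\;=\;\one[X_0=u]-\one[X_n=u]\,.
\]
Both assertions of the lemma then follow by reading off this closed form. Since each indicator takes values in $\{0,1\}$, their difference lies in $\{-1,0,+1\}$, which is the first claim. For the dichotomy, if the walk has returned to its starting point, $X_n=X_0$, then at $X_0$ both indicators equal one and at every other vertex both vanish, so $F_n\equiv 0$ on $G$. If instead $X_n\neq X_0$, then the initial vertex $v_i=X_0$ is the unique vertex with $F_n(v_i)=+1$, the final vertex $v_f=X_n$ is the unique vertex with $F_n(v_f)=-1$, and all remaining vertices carry zero flow. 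As the argument is purely combinatorial, being nothing more than the discrete conservation-of-flow (Kirchhoff) identity for a single oriented path, there is no genuine obstacle; this is precisely why the statement can fairly be called well known.
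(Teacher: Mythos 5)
Your proof is correct. The paper itself omits the proof of this lemma, calling it a well-known fact, and your telescoping computation $F_n(u)=\one[X_0=u]-\one[X_n=u]$ is precisely the standard argument the authors are alluding to; both claims of the lemma follow immediately from that closed form exactly as you state.
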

A consequence of the previous result is the following proposition.
\begin{proposition}\label{prop:good>0} If $X_0^n=\{X_0,\ldots, X_n\}$ is a closed path, i.e., $X_0=X_n$, and $c_n(u,w)\neq 0$ for some vertex $w\sim u$ then, for any good edge $(u,v)$ of $u$, one has that $c_n(u,v)>0.$
\end{proposition}
\begin{proof}
We first note that since $X_0=X_n$, Lemma~\ref{lema:flow} implies that $F_n(u)= 0$ for all $u\in V$.  If $c_n(u,w)>0$ then the good edge has a positive crossing number since it maximises the crossing numbers among the neighbours of $u$. If $c_n(u,w)<0$, then we can conclude from $F_n(u)=0$, that there exists a vertex $w^*$ such that $c_n(u,w^*)>0$.  Hence, the claim follows.
\end{proof}
Of course there is no guarantee that the Ant RW at time $n$ does form a closed path. However, in any case we have the following result.
\begin{proposition}\label{prop:good>1} Let $u$ be a given vertex, and fix a realisation $X_0^n=\{X_0,\ldots,$\break$ X_n\}$ of the Ant RW until time $n$. Assuming that there exists an edge $(u,w)$ such that $c_n(u,w)\leq -2$, then any good edge $(u,v)$ of $u$ satisfies $c_n(u,v)\geq 1.$
\end{proposition}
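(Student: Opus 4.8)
The plan is to argue by contradiction, leveraging the flow constraint from Lemma~\ref{lema:flow} exactly as in the proof of Proposition~\ref{prop:good>0}, but without assuming that the path is closed. The essential point is that Lemma~\ref{lema:flow} guarantees $F_n(u)\in\{-1,0,+1\}$ for \emph{any} realisation of the walk, closed or not. Since we no longer have the stronger information $F_n(u)=0$ available to us, the hypothesis is correspondingly strengthened from $c_n(u,w)\neq 0$ to $c_n(u,w)\leq -2$, and this gap of one unit is precisely what compensates for the possibility $F_n(u)=-1$.

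First I would suppose, aiming for a contradiction, that some good edge $(u,v)$ of $u$ satisfies $c_n(u,v)\leq 0$. Because $(u,v)$ is good, it maximises the crossing numbers among the neighbours of $u$, so every neighbour $w'\sim u$ obeys $c_n(u,w')\leq c_n(u,v)\leq 0$. In particular all summands in $F_n(u)=\sum_{w'\sim u}c_n(u,w')$ are nonpositive, and one of them, namely the term indexed by the given $w$, satisfies $c_n(u,w)\leq -2$ by hypothesis. Hence
\[
F_n(u)\;=\;\sum_{w'\sim u}c_n(u,w')\;\leq\; c_n(u,w)\;\leq\;-2\,.
\]
This contradicts $F_n(u)\in\{-1,0,+1\}$ from Lemma~\ref{lema:flow}. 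Therefore $c_n(u,v)\geq 1$, as claimed.

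I do not expect any genuine obstacle here; the argument is a short deterministic deduction. The only point deserving care is conceptual rather than technical: one must recognise that Lemma~\ref{lema:flow} delivers the bound $F_n(u)\in\{-1,0,+1\}$ unconditionally, so that the closed-path assumption of Proposition~\ref{prop:good>0} is not needed, provided the $-2$ threshold is in force. It is worth remarking, for the reader's benefit, why $-2$ (and not merely $c_n(u,w)<0$) is required: if one only knew $c_n(u,w)=-1$ with all other neighbours having crossing number $0$, then $F_n(u)=-1$ would be perfectly consistent with Lemma~\ref{lema:flow} and no contradiction would arise, so the conclusion $c_n(u,v)\geq 1$ could fail. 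The stronger hypothesis $c_n(u,w)\leq -2$ is exactly the assumption that pushes the flow below $-1$ and forces the good edge to carry a strictly positive, hence at least unit, crossing number.
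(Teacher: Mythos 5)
Your argument is correct and is essentially the paper's own proof: both use Lemma~\ref{lema:flow} to get $F_n(u)\geq -1$ and observe that this is incompatible with every crossing number at $u$ being nonpositive when one of them is $\leq -2$, so the maximiser must be $\geq 1$. The only difference is presentational (you phrase it as a contradiction, the paper as a direct impossibility), and your remark on why the threshold $-2$ rather than $-1$ is needed is a nice, accurate observation.
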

\begin{proof}
Since $c_n(u,w)\leq -2$ and $F_n(u)=\sum_{z:z\sim u}c_n(u,z)\geq -1$ (cf. Lemma~\ref{lema:flow}) it is impossible that $c_n(u,w)\leq 0$ for all $w\sim u$. Therefore, any good edge $(u,v)$ of $u$ satisfies $c_n(u,v)\geq 1$.
\end{proof}

Now we will show that if
$S_\tau\neq \varnothing$, and $X_\tau=v_0 \in S_\tau$, then $(Y^\tau_n)_n$ is non backtracking. Recall that $Y^\tau_0=X_\tau=v_0.$ By Proposition \ref{prop:good>1}, any neighbour $v_1$ of $Y^\tau_0$ such that $(Y^\tau_0,v_1)$ is a good edge satisfies $c^\tau_0(Y^\tau_0,v_1)\geq 1$. To proceed assume $Y^\tau_1=v_1$. As consequence, $c^\tau_{1}(Y^\tau_0,Y^\tau_{1})\geq 2,$ which implies $c^\tau_{1}(Y^\tau_{1},Y^\tau_0)=-c^\tau_{1}(Y^\tau_0,Y^\tau_1)\leq -2.$ Thus, we can again apply Proposition~\ref{prop:good>1}. Consequently, for all neighbours $v_2$ of $Y^\tau_{1}$ such that $(Y^\tau_{1},v_2)$ is a good edge one has that $c_{1}^\tau(Y^\tau_{1},v_2)\geq 1$. Note that $v_2\neq Y^\tau_0$ since $c_{1}^\tau(Y^\tau_{1},Y_0^\tau)\leq -2.$ In particular, $Y^\tau_{1}=v_1$ has degree at least $2.$ The key observation is that for any $v_2$ as above the path $\{Y^\tau_{0}=v_0,\;Y^\tau_{1}=v_1,\; Y^\tau_{2}=v_2\}$ is non-backtracking and that $c^\tau_{2}(Y^\tau_{1},Y^\tau_{2})\geq 2$. 

Repeatedly applying the above arguments shows that $(Y_n^\tau)$ never backtracks. Moreover, it also shows that since the graph $G$ is finite, and $(Y_n^\tau)$ walks along vertices of degree at least $2$,  it eventually follows a circuit consisting of strictly positive good edges.

\subsection{Proof of Lemma \ref{lemma:uniform-bound}}
\label{sec:uniform-bound}
 We want to prove that $\bb P_{\xi_{\tau_k}}(\tau_{k+1}=\infty)\geq \delta$ on the set where $\tau_k$ is finite. This is implied by
\[\bb P_{\xi_{\tau_k}}\big(\forall\, n\geq \tau_k,\,X_n=Y^{\tau_k}_{n-\tau_k}\big)\;\geq\; \delta\,.\]

To prove the above statement we will need to estimate the probability of giving turns around a circuit. If $C=(u_0,\ldots, u_{\ell-1})$ is a circuit and $X_n=u_0=u_\ell$, then the probability of making a turn around $C$ is given by
\begin{equation}\label{eq:TC1}
 \prod_{j=0}^{\ell-1}\frac{1}{1+\!\!\!\displaystyle \sum_{\substack{w\,:\,w\sim u_{j},\\ w\neq u_{j+1}}}\exp\big\{c_{n+j}(u_{j},w)- c_{n+j}(u_{j},u_{j+1})\big\}}\,.
\end{equation}

To analyse~\eqref{eq:TC1} it comes in handy to introduce the quantity $R_n^C$ defined via
\begin{equation}\label{eq:RnC}
R_n^C\;=\; \min_{0\leq j\leq \ell-1}\min_{\substack{y\,:\,y\sim u_j,\\ y\neq u_{j+1}}}
\big[c_n(u_j, u_{j+1})- c_n(u_j, y)\big].
\end{equation}
Recall the definition of the event $T^C_m$ in \eqref{eq2.5aa}.
\begin{proposition}\label{lemma:trapped} Assume that $R_n^C\geq -2$ and let $|V|$ be the number of vertices of $G$ and $D$ be the maximum degree of $G$. Then for any configuration $\xi_0$
\[\bb P_{\xi_0}(T^C)\geq \exp\bigg(\frac{-|V|De^2}{1-e^{-1}}\bigg).\]
\end{proposition}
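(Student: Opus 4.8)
The plan is to bound $\bb P_{\xi_0}(T^C)$ from below by the probability of the single event $T^C_0$, i.e. the event that the walk spins around $C$ forever starting from time $0$. Up to relabelling the root we may take $X_0=u_0$, and then the hypothesis $R_n^C\geq -2$ is to be read as a statement about the initial field, namely $R_0^C\geq -2$; since $T^C=\bigcup_{m\geq 0}T^C_m\supseteq T^C_0$ (cf. \eqref{trappingforever} and \eqref{eq2.5aa}) it suffices to estimate $\bb P_{\xi_0}(T^C_0)$. I would decompose this event into the successive turns around $C$ and estimate each turn through the product formula \eqref{eq:TC1}, so that
\[
\bb P_{\xi_0}(T^C)\;\geq\; \bb P_{\xi_0}(T^C_0)\;=\;\prod_{k\geq 0}\big(\text{probability of completing the }(k+1)\text{-th turn}\big)\,.
\]

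The heart of the argument is a purely deterministic bookkeeping of the crossing numbers along a trajectory that keeps spinning around $C$. The key observation is that as the walk performs complete turns, each forward edge $(u_j,u_{j+1})$ is crossed exactly once per turn, so after $k$ completed turns its crossing number has grown by $k$; meanwhile every competing edge at $u_j$ either stays constant (the off-circuit neighbours, which are never traversed) or strictly decreases (the backward edge $(u_j,u_{j-1})$, whose reverse is crossed once per turn). Combined with $R_0^C\geq -2$, which by \eqref{eq:RnC} says exactly that $c_0(u_j,w)-c_0(u_j,u_{j+1})\leq 2$ for every competing neighbour $w$, this shows that at the moment the walk sits at $u_j$ during the $(k+1)$-th turn every exponent appearing in \eqref{eq:TC1} is at most $2-k$.

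Since each vertex has at most $D$ competing neighbours, each factor of \eqref{eq:TC1} during the $(k+1)$-th turn is then bounded below by $(1+De^{2-k})^{-1}$, and using $1+x\leq e^{x}$ the probability of completing that turn is at least $\exp\{-\ell D e^{2-k}\}$, where $\ell$ is the length of $C$. Multiplying over all turns and summing the resulting geometric series yields
\[
\bb P_{\xi_0}(T^C)\;\geq\;\prod_{k\geq 0}\exp\{-\ell D e^{2-k}\}\;=\;\exp\Big\{-\ell D e^2\sum_{k\geq 0}e^{-k}\Big\}\;=\;\exp\Big\{-\frac{\ell D e^2}{1-e^{-1}}\Big\}\,,
\]
and since a circuit visits distinct vertices we have $\ell\leq |V|$, which gives the stated bound.

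The step I expect to be the main obstacle is the deterministic monotonicity claim, namely making fully rigorous that during the spinning every competing exponent decays like $2-k$. This requires carefully distinguishing the three types of neighbours of $u_j$ (forward, backward, and off-circuit) and tracking, for each, how many times the corresponding directed edge has been crossed up to the relevant time $k\ell+j$; the fact that $C$ consists of distinct vertices and distinct \emph{directed} edges is precisely what prevents these three classes from interfering. Once this bookkeeping is in place, the probabilistic estimate and the summation are routine.
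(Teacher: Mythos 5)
Your proposal is correct and follows essentially the same route as the paper: your deterministic per-turn bookkeeping is exactly the paper's Lemma~\ref{le:1st-turn} (the exponents in \eqref{eq:TC1} only decrease over a completed turn), and your product over successive turns is formalised there as an induction via the Markov property (Lemma~\ref{le:kturns}, using that $R^C_{(k-1)\ell}\geq M+k-1$ on the event of $k-1$ completed turns), after which the limit $k\to\infty$, the bound $1+x\leq e^x$, the geometric series, and $\ell\leq|V|$ are applied exactly as you describe.
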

We prove this proposition in Section~\ref{sec:trapped}.
Now we proceed to prove Lemma~\ref{lemma:uniform-bound}.
We distinguish between several cases.

\textbf{(1)}
 $S_{\tau_k}=\varnothing$. In this case, the absolute value of all crossing numbers are bounded by one.
  Observe that on the event $\{S_{\tau_k}=\varnothing\}$ one has that $R_{\tau_k}^{C^*}\geq -2$. Define the stopping time $\sigma=\inf\{m\geq 0\,:\,Y^{\tau_k}_m \in C^*\}$. We have the equality \[\{\forall\, n\geq \tau_k,\,X_n=Y^{\tau_k}_{n-\tau_k}\}=\{X_{\tau_k}\to C^*\}\cap T^C_\sigma.\]
  
Therefore, we need to estimate the probability
\begin{equation*}
\bb P_{\xi_{\tau_k}} \bigg[\{X_0 \to C^*\}\cap T_{\sigma}^{C^*}\bigg]\,.
\end{equation*}
 
In the following we will use the Markov Property at time $\sigma.$ Notice that on the event $\{X_0\to C^*\}$ one still has that $R_{\sigma}^{C^*}\geq -2$ and it always holds that $|C|\leq |V|$. Then with the help of Proposition~\ref{lemma:trapped},  we can estimate, almost surely, the above probability from below by
\begin{equation*}
\bb E_{\xi_{\tau_k}}\Big[\one_{\{X_0 \to C^*\}}\bb P_{\xi_{\sigma}}\big[T^{C}\big]\Big]
\;\geq\; \bb P_{\xi_0}\big[X_{\tau_k}\to C^*\big] \exp\bigg(\frac{-|V|De^2}{1-e^{-1}}\bigg)\,.
\end{equation*}
It only remains to bound the probability on the right hand side above. To that end note that along the path $X_{\tau_k}\to C^*$ all edges have crossing number bounded in modulus by one and that $\sigma\leq |V|.$ Hence, for any fixed vertex $v\in V,$ on the event $\{X_{\tau_k}=v, S_{\tau_k}=\varnothing\}$,
\begin{equation*}
\bb P_{\xi_{\tau_k}}\big[X_0\to C^*\big]\;=\;\bb P_{v}\big[X_0\to C^*\big] \;\geq\; \frac{1}{(1+D e^{2})^{|V|}}\,.
\end{equation*}
Thus, we can conclude that on $\{S_{\tau_k}=\varnothing\}$
\begin{equation*}
\bb P_{\xi_{\tau_k}}\big(\forall\, n\geq \tau_k,\,X_n=Y^{\tau_k}_{n-\tau_k}\big)
\geq\; \frac{1}{(1+D e^{2})^{|V|}}\,\exp\bigg(\frac{-|V|De^2}{1-e^{-1}}\bigg)\;:= \;\delta^{(1)}\,.
\end{equation*}

\textbf{(2)}
$S_{\tau_k}\neq \varnothing$, and $X_{\tau_k}=v_0 \in S_{\tau_k}$. By definition on the event
\[\{\forall\, n\geq \tau_k,\,X_n=Y^{\tau_k}_{n-\tau_k}\}\] 
the walker will follow only good edges and eventually closes a good circuit $C$ and gives infinite turns around it. Let $\sigma$ be the first time the walker meets $C$. Since $C$ is a good circuit, then $R_\sigma^C\geq 0> -2$. Again the path joining $X_{\tau_k}$ with $C$ is of length at most $|V|$. Hence, similarly as in the case $S_{\tau_k}=\varnothing$, we see that on $\{S_{\tau_k}\neq\varnothing\}\cap\{X_{\tau_k}\notin C\}$
\begin{equation*}
\bb P_{\xi_{\tau_k}}\big(\forall\, n\geq \tau_k,\,X_n=Y^{\tau_k}_{n-\tau_k}\big)
\geq\; \frac{1}{(1+D)^{|V|}}\,\exp\bigg(\frac{-|V|De^2}{1-e^{-1}}\bigg)\;:= \;\delta^{(2)}\,.
\end{equation*}
where the terms $e^2$ are not present since the path connecting $X_{\tau_k}$ with $C$ consists only of good edges.\smallskip

\textbf{(3)} $S_{\tau_k}\neq \varnothing$, and $X_{\tau_k} \notin S_{\tau_k}.$ Define  $\sigma=\inf\{m\geq 0 \,:\,Y^{\tau_k}_m \in S_{\tau_k} \}.$ Denote by $v_0$ a random vertex in $S_{\tau_k}$ that minimizes the graph distance to $X_{\tau_k}=Y^{\tau_k}_0$ among all vertices in $S_{\tau_k}$.  In this case the path $Y^{\tau_k}_{0}\to v_0$ lies completely in $S_{\tau_k}^\complement,$ i.e., all edges on this path have crossing numbers bounded in absolute value by one. 
Hence, as in the case \textbf{(1)} we obtain, almost surely,
\begin{equation*}
\bb P_{\xi_{\tau_k}}\big[X_0\to S_0\big]\;\geq\; \frac{1}{(1+De^2)^{|V|}}\,.
\end{equation*}

On the event $\{Y^{\tau_k}_0\to S_{\tau_k}\}$ we then have that $Y^{\tau_k}_{\sigma}\in S_{\tau_k}\subset  S_{\tau_k+\sigma}$. In particular, on the event $\{\forall\,n\geq \tau_k,\,X_n=Y^{\tau_k}_{n-\tau_k}\}$ we have that $X_{\tau_k+\sigma}\in S_{\tau_k+\sigma}$ and we are in the setting of the previous case. Therefore, on the event $\{S_{\tau_k}\neq \varnothing\}\cap\{X_{\tau_k}\notin S_{\tau_k}\}$ we can bound
\begin{align*}
&\bb P_{\xi_{\tau_k}}\big(\forall\, n\geq \tau_k,\,X_n=Y^{\tau_k}_{n-\tau_k}\big)\\
&\geq\; \frac{1}{(1+De^2)^{|V|}}\,\frac{1}{(1+D)^{|V|}}\exp\bigg(\frac{-|V|De^2}{1-e^{-1}}\bigg)\;:=\; \delta^{(3)}\,.
\end{align*}
Collecting all estimates obtained we see that the proof is concluded with the choice $\delta=\min\big\{\delta^{(1)}, \delta^{(2)}, \delta^{(3)}\big\}=\delta^{(3)}$.

\subsection{Proof of Proposition~\ref{lemma:trapped}}
\label{sec:trapped}
Fix a circuit $C=(u_0,\ldots,u_{\ell-1})$ and recall our notation $i(\ell)=i \mbox{ mod }\ell$. For $k\in \bb N$  we define the \textit{truncated trapping event} $T_m^{C,k}$ by
\begin{equation}\label{eq:truncated}
T_m^{C,k} \;= \; \{X_{m+i}\;=\;u_{i  (\ell)}, \forall\; 0\leq i\leq k\ell\}\,.
\end{equation}
In plain words, $T_m^{C,k}$ is the event in which the walk makes  $k$ consecutive turns around $C$ starting at time $m$.

To continue bounding the above trapping event we adopt a notation in this section that slightly differs from the one used in the rest of the article. For a field of integers $\{c_0(x,y):\, (x,y)\in E\}$ we denote with a slight abuse of notation
\[c_n (x,y) \;=\;c_0(x,y)+ \sum_{k=0}^{n-1}\Big( \one\big[(X_k,X_{k+1})=(x,y)\big]-\one\big[(X_k,X_{k+1})=(y,x)\big]\Big)\,.\]
Recall that we write $\xi_n=(X_n,a_n)$ for the pair consisting of the position of the Ant RW and its induced environment at time $n$. Let $\xi_0=(u_0,a_0)$ be  the initial state of this Markov chain. We then have the following:
\begin{lemma}\label{le:1st-turn}
Let $G$ be any locally finite graph. If $X_0=u_0$, then 
\begin{equation*}
\bb P_{\xi_{0}}(T_0^{C,1})
\;\geq\;  \prod_{j=0}^{\ell-1}\frac{1}{1+\displaystyle \sum_{\substack{w\,:\,w\sim u_{j}\\ w\neq u_{j+1}}}\exp\big\{c_{0}(u_{j},w)- c_{0}(u_{j},u_{j+1})\big\}}\,.
\end{equation*}
\end{lemma}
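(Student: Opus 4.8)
The plan is to first write $\bb P_{\xi_0}(T_0^{C,1})$ as an \emph{exact} product and then compare it with the claimed lower bound factor by factor. On the event $T_0^{C,1}$ of \eqref{eq:truncated} (with $m=0$, $k=1$) the walk performs the $\ell$ prescribed steps $u_0\to u_1\to\cdots\to u_{\ell-1}\to u_0$. Applying the chain rule and, at each step $j=0,\dots,\ell-1$, the transition rule \eqref{eq:transitions} rewritten in the form \eqref{eq:vw}, and observing that on this event the field $c_j$ is the deterministic field obtained from $c_0$ by crossing the edges $(u_0,u_1),\dots,(u_{j-1},u_j)$ (and is therefore $\mc G_j$-measurable), I would obtain the exact identity
\begin{equation*}
\bb P_{\xi_0}(T_0^{C,1})\;=\;\prod_{j=0}^{\ell-1}\frac{1}{1+\displaystyle\sum_{\substack{w\,:\,w\sim u_j\\ w\neq u_{j+1}}}\exp\big\{c_j(u_j,w)-c_j(u_j,u_{j+1})\big\}}\,,
\end{equation*}
where $u_\ell=u_0$. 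This is precisely the asserted right-hand side, except that the running field $c_j$ appears in place of the initial field $c_0$. It therefore suffices to show, factor by factor, that replacing $c_j$ by $c_0$ does not increase any denominator.

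The heart of the matter is then a deterministic bookkeeping of how $c_j$ differs from $c_0$ along the turn. Because the vertices $u_0,\dots,u_{\ell-1}$ are distinct, the vertex $u_j$ is visited for the first and so far only time at time $j$; hence, among all edges incident to $u_j$, the only one crossed during the first $j$ steps is the arrival edge, traversed once in the direction $u_{j-1}\to u_j$. This yields
\begin{equation*}
c_j(u_j,w)=c_0(u_j,w)\ \text{ for every } w\neq u_{j-1},\qquad c_j(u_j,u_{j-1})=c_0(u_j,u_{j-1})-1\,.
\end{equation*}
Since the circuit has length $\ell\ge 3$, the three vertices $u_{j-1},u_j,u_{j+1}$ are distinct, so in particular $u_{j+1}\neq u_{j-1}$ and the reference edge is untouched: $c_j(u_j,u_{j+1})=c_0(u_j,u_{j+1})$. (For $j=0$ there is no arrival edge and every crossing number equals its initial value.)

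Combining the two displays, for every competing neighbour $w\neq u_{j+1}$ of $u_j$ one has
\begin{equation*}
c_j(u_j,w)-c_j(u_j,u_{j+1})\;\le\; c_0(u_j,w)-c_0(u_j,u_{j+1})\,,
\end{equation*}
with strict inequality exactly in the single term $w=u_{j-1}$. Consequently each exponential, and hence each denominator in the exact product, is bounded above by the corresponding denominator formed from $c_0$, so each of the $\ell$ factors is at least the matching factor in the claimed bound; taking the product over $j$ finishes the proof. The one genuinely delicate point is the crossing-number bookkeeping of the second paragraph: one must verify that within a single turn the reference edge $(u_j,u_{j+1})$ is never re-crossed while the backward correction at $u_{j-1}$ can only lower the exponent. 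Both facts rest on the distinctness of the vertices (and undirected edges) of $C$, and the $\ell=2$ situation, where the return step re-uses the edge just crossed, shows that the inequality would otherwise go the wrong way.
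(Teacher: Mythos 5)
Your proof is correct and follows essentially the same route as the paper: both factorize $\bb P_{\xi_0}(T_0^{C,1})$ into the product \eqref{eq:TC1} of one-step transition probabilities and then check, term by term, that replacing the running field $c_j$ by $c_0$ can only increase each denominator. Your bookkeeping is in fact slightly sharper than the paper's (which loosely asserts that crossing numbers off $C$ are unchanged, whereas the reverse edge $(u_j,u_{j-1})$ actually decreases by one --- an inequality in the favourable direction), and your remarks that distinctness of vertices forces $u_{j+1}\neq u_{j-1}$ and that the bound fails for $\ell=2$ are valid observations consistent with the paper's implicit convention that circuits have length at least $3$.
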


\begin{proof}
Observe that on the event $T_0^{C,1}$ the walker makes one turn around $C$. Therefore, on that event we have, for all $0\leq j\leq \ell-1$,
\begin{align*}
\begin{cases}
c_0(u,v)=c_{j}(u,v),&\mbox{ if }(u,v)\notin C,\\
c_{j}(u,v)\geq c_{0}(u,v),&\mbox{ if } (u,v)\in C.
\end{cases}
\end{align*}
 It is now plain to see that for all $j\in\{0,\ldots, \ell-1\}$ and for all $w\sim u_j,$ $w\neq u_{j}$,
\begin{equation*}
c_{j}(u_{j},w)- c_{j}(u_{j},u_{j+1}) \;\leq\; c_{0}(u_{j},w)- c_{0}(u_{j},u_{j+1})\,.
\end{equation*}
Hence, the claim follows from equation~\eqref{eq:TC1}.
\end{proof}

\begin{lemma}\label{le:kturns} Let $G$ be a locally finite graph with maximum degree $D<\infty.$ For all $k\in \bb N$ and  all $M\in \bb R,$ if $R_0^C\geq M$ and $X_n=u_0$, then
\begin{equation*}
\bb P_{\xi_0}(T_0^{C,k})
\;\geq\; \frac{1}{\prod_{j=0}^{k-1}\big(1+ D\exp\{-M-j\}\big)^\ell}\,.
\end{equation*}
\end{lemma}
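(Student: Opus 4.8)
The plan is to argue by induction on the number of turns $k$, using Lemma~\ref{le:1st-turn} for the base step and the Markov property for $(\xi_n)_n$ as the inductive engine, the key input being that a single completed turn improves the environment in the precise sense that $R^C_\ell\geq R^C_0+1$. I would record this deterministic fact first. On the event $T_0^{C,1}$ the walk makes one turn around $C$, so each directed edge $(u_j,u_{j+1})$ is crossed exactly once forward and $c_\ell(u_j,u_{j+1})=c_0(u_j,u_{j+1})+1$. For a neighbour $y\sim u_j$ with $y\neq u_{j+1}$ that is not on $C$, the edge $(u_j,y)$ is never crossed during the turn, so $c_\ell(u_j,y)=c_0(u_j,y)$ and the gap $c(u_j,u_{j+1})-c(u_j,y)$ increases by exactly $1$; for the on-circuit neighbour $y=u_{j-1}$, the edge $(u_{j-1},u_j)$ is crossed forward, so $c_\ell(u_j,u_{j-1})=c_0(u_j,u_{j-1})-1$ and the gap increases by $2$. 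Minimising over $j$ and $y$ gives $R^C_\ell\geq R^C_0+1$.

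For the base case I would convert the product bound of Lemma~\ref{le:1st-turn} into the claimed form. Since $R^C_0\geq M$, every summand obeys $c_0(u_j,w)-c_0(u_j,u_{j+1})\leq -M$, and there are at most $D$ neighbours $w\neq u_{j+1}$, so each factor of the product is at least $1/(1+D\exp\{-M\})$. Multiplying the $\ell$ factors yields
\[
\bb P_{\xi_0}(T_0^{C,1})\;\geq\;\frac{1}{\big(1+D\exp\{-M\}\big)^\ell}\,,
\]
which is precisely the $k=1$ instance of the claim; the case $k=0$ is the empty product and holds trivially since $T_0^{C,0}=\{X_0=u_0\}$.

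For the inductive step I would decompose the event of $k+1$ turns as a first turn followed by $k$ further turns, and condition at time $\ell$ using the Markov property for $(\xi_n)_n$:
\[
\bb P_{\xi_0}(T_0^{C,k+1})\;=\;\bb E_{\xi_0}\big[\one_{T_0^{C,1}}\,\bb P_{\xi_\ell}(T_0^{C,k})\big]\,.
\]
On $T_0^{C,1}$ the walk is back at $u_0$ and, by the deterministic fact above, the environment satisfies $R^C_\ell\geq M+1$. Applying the induction hypothesis with $M$ replaced by $M+1$ and re-indexing $j\mapsto j+1$ gives, on that event,
\[
\bb P_{\xi_\ell}(T_0^{C,k})\;\geq\;\prod_{j=0}^{k-1}\big(1+D\exp\{-(M+1)-j\}\big)^{-\ell}\;=\;\prod_{j=1}^{k}\big(1+D\exp\{-M-j\}\big)^{-\ell}\,.
\]
Combining this with the base-case bound $\bb P_{\xi_0}(T_0^{C,1})\geq(1+D\exp\{-M\})^{-\ell}$ reinserts the missing $j=0$ factor and produces exactly $\prod_{j=0}^{k}(1+D\exp\{-M-j\})^{-\ell}$, closing the induction.

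I do not anticipate a serious obstacle: the structure is a clean induction. The one point requiring care is the monotonicity $R^C_\ell\geq R^C_0+1$, i.e. the crossing-number bookkeeping showing that a completed turn strictly widens, at every vertex of $C$, the worst-case gap between the circuit edge and its competitors. This is exactly what makes the geometric factors $\exp\{-M-j\}$ accumulate, so that the infinite product $\prod_{j\geq 0}(1+D\exp\{-M-j\})^\ell$ converges; that convergence is what will ultimately be exploited to bound $\bb P_{\xi_0}(T^C)$ in Proposition~\ref{lemma:trapped}.
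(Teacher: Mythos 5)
Your proof is correct and follows essentially the same route as the paper: induction on $k$ driven by the Markov property for $(\xi_n)_n$, with the base case read off from Lemma~\ref{le:1st-turn} and the inductive step powered by the deterministic improvement of $R^C$ after each completed turn. The only (immaterial) difference is that you peel off the first turn and apply the induction hypothesis with $M+1$, whereas the paper peels off the last turn and uses $R^C_{(k-1)\ell}\geq M+k-1$ on $T_0^{C,k-1}$; your explicit verification that $R^C_\ell\geq R^C_0+1$ on $T_0^{C,1}$ (including the $y=u_{j-1}$ case) is a welcome detail the paper leaves implicit.
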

\begin{proof}
We prove the result by induction. The case $k=1$ is an immediate consequence of Lemma \ref{le:1st-turn} using that $D$ is a bound for the degree of any vertex of~$G$. 

Assume that the result is true for $1, \ldots, k-1$ and for all $M\in \bb R.$ Using the Markov Property and the observation $T_{0}^{C,k} = T_{0}^{C,k-1}\cap T_{(k-1)\ell}^{C,1}$,  we obtain that
\begin{align*}
\bb P_{\xi_0}\big[T_{0}^{C,k}\big]&\;=\;\bb E_{\xi_0}\big[\bb P_{\xi_0}(T_{0}^{C,k} \,\vert\, \mc G_{(k-1)\ell})\big]\\
&\;=\;\bb E_{\xi_0}\big[\bb P_{\xi_0}\big(T_{0}^{C,k-1}\cap T_{(k-1)\ell}^{C,1} \,\vert\, \mc G_{(k-1)\ell}\big)\big]\\
&\;=\;\bb E_{\xi_0}\big[\one_{T_{0}^{C,k-1}}\bb P_{\xi_{(k-1)\ell}}\big(T_{0}^{C,1}\big)\big]\,.
\end{align*}
Now observe that on the event $T_{0}^{C,k-1}$, we have that $R^C_{(k-1)\ell}\geq M+k-1.$ Therefore, using the base case $k=1$, we can write
\begin{align*}
\bb E_{\xi_0}\big[\one_{T_0^{C,k-1}}\bb P_{\xi_{(k-1)\ell}}(T_0^{C,1})\big] &\;\geq\; \bb E_{\xi_0}\Big[\one_{T_0^{C,k-1}}\frac{1}{(1+ D\exp\{-M-(k-1)\})^\ell}\Big]\\
&\;=\; \frac{1}{(1+ D\exp\{-M-(k-1)\})^\ell}\,\bb P_{\xi_0}\big(T_0^{C,k-1}\big)
\end{align*}and using the induction hypothesis we  finish the proof.
\end{proof}

To finish the proof of Proposition~\ref{lemma:trapped} we just use the following facts:
\begin{enumerate}
\item $\bb P_{\xi_0}\big(T_0^C\big)=\lim_{k\to \infty} \bb P_{\xi_0}\big(T_0^{C,k}\big)$,
\item $1+x\leq e^x,$
\item $\sum_{i\geq 0}x^i=1/(1-x)$ if $x\in [0,1).$
\end{enumerate}

\section{Proof  of \texorpdfstring{Theorem~\ref{thm2.1}}{Theorem 2.1} in the \texorpdfstring{$\bb Z^d$}{Zd} case} \label{sec:Zd}

We start now to deal with the proof of Theorem~\ref{thm2.1} in the  case $G=\bb Z^d$ with $d\geq 2$. A sketch of the proof goes as follows. First, we  will argue that the Ant RW has a uniformly bounded from below probability of being trapped in a circuit right after escaping certain increasing balls. This will show that the walk is almost surely bounded. Then, we will construct a simultaneous coupling between the Ant RW on $\bb Z^d$ and on all those balls. Under that coupling and by the previous boundedness result, we will conclude that the Ant RW on $\bb Z^d$ almost surely coincides with the Ant RW on some (random) ball. This with the finite case of Theorem~\ref{thm2.1} will permit to conclude the proof.
\begin{proposition}\label{prop62}
The Ant RW in $G=\bb Z^d$ with $d\geq 2$ is almost surely bounded.
\end{proposition}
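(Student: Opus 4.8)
The plan is to show that every time the walk attains a new maximal distance from the origin, it has a probability bounded below uniformly in $k$ and in the environment of becoming trapped in a small circuit placed in the fresh region it just entered; a renewal estimate then forces trapping, hence boundedness, almost surely. Concretely, write $|x|$ for the graph ($\ell^1$) distance of $x\in\bb Z^d$ to the origin and, for $k\ge 1$, set
\[
\tilde\sigma_k\;=\;\inf\{n\ge 0:\ |X_n|\ge 3k\}\,.
\]
Since each step changes $|X_n|$ by exactly one, on $\{\tilde\sigma_k<\infty\}$ one has $|X_{\tilde\sigma_k}|=3k$ and the vertex $v:=X_{\tilde\sigma_k}$, as well as every vertex at distance $\ge 3k$, is \emph{fresh}: it has not been visited up to time $\tilde\sigma_k$, so all crossing numbers on its incident edges vanish at that time.

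The next step is the geometric construction, which is where $d\ge 2$ enters. As $v\neq 0$, some coordinate of $v$ is nonzero; I would pick a signed basis vector $\epsilon_1 e_{i}$ with $|v+\epsilon_1 e_{i}|=|v|+1$, and a second, distinct signed basis vector $\epsilon_2 e_{j}$ (available precisely because $d\ge 2$) with $|v+\epsilon_2 e_{j}|=|v|+1$. These generate a unit square $C_k=(u_0,u_1,u_2,u_3)$ rooted at $u_0=v$ whose three other vertices lie at distance $\ge 3k+1$ and are thus fresh. A direct inspection of \eqref{eq:RnC} then gives $R^{C_k}_{\tilde\sigma_k}=0\ge -2$: at $u_1,u_2,u_3$ every incident crossing number is zero, while at $u_0=v$ the only negative neighbouring crossing number sits on the edge the walk traversed to enter $v$, which is not the edge $(u_0,u_1)$.

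Having secured $R^{C_k}_{\tilde\sigma_k}\ge -2$, I would invoke Proposition~\ref{lemma:trapped} for the circuit $C_k$. The point is that its proof, via Lemma~\ref{le:kturns}, actually produces the lower bound $\exp\big(-\ell D e^2/(1-e^{-1})\big)$ with the circuit \emph{length} $\ell$ in place of $|V|$ (the finite-graph statement merely used $\ell\le |V|$), and that this bound is for the event $T_0^{C}$ of looping immediately and forever. For the unit square $\ell=4$ and $D=2d$, so, writing $A_k=T^{C_k}_{\tilde\sigma_k}$ for the event that the walk spins around $C_k$ from time $\tilde\sigma_k$ onward, the Markov property at $\tilde\sigma_k$ yields, on $\{\tilde\sigma_k<\infty\}$,
\[
\bb P_{\xi_{\tilde\sigma_k}}(A_k)\;\ge\;\delta\;:=\;\exp\!\Big(\tfrac{-8d\,e^2}{1-e^{-1}}\Big)\;>\;0\,,
\]
a constant independent of $k$ and of the configuration $\xi_{\tilde\sigma_k}$.

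Finally I would run the renewal argument. If $A_k$ occurs the walk never leaves the ball of radius $3k+2$, so $\tilde\sigma_{k+1}=\infty$; hence $\{\tilde\sigma_{k+1}<\infty\}\subseteq\{\tilde\sigma_k<\infty\}\cap A_k^{c}$. Conditioning on $\mc G_{\tilde\sigma_k}$ and using the uniform bound gives
\[
\bb P(\tilde\sigma_{k+1}<\infty)\;=\;\bb E\big[\one_{\{\tilde\sigma_k<\infty\}}\,\bb P_{\xi_{\tilde\sigma_k}}(A_k^{c})\big]\;\le\;(1-\delta)\,\bb P(\tilde\sigma_k<\infty)\,,
\]
so $\bb P(\tilde\sigma_k<\infty)\le (1-\delta)^k$. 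As the walk is unbounded exactly on $\bigcap_k\{\tilde\sigma_k<\infty\}$, letting $k\to\infty$ shows this event is null, proving the proposition. I expect the genuine obstacle to be the middle step, namely isolating a truly fresh small circuit at the record vertex and verifying $R^{C_k}_{\tilde\sigma_k}\ge -2$, since this is exactly what lets the \emph{finite} length $\ell=4$ (rather than $|V|=\infty$) enter the trapping bound and thereby makes the uniform constant $\delta$ available on the infinite lattice; the remaining renewal estimate is soft.
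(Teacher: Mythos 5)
Your proof is correct and follows essentially the same route as the paper: escape times of the balls $B_{3k}$, a fresh length-$4$ circuit just outside the ball with $R^{C}\ge -2$, the uniform trapping bound of Proposition~\ref{lemma:trapped}, and a geometric renewal estimate. Your one genuine refinement is to note explicitly that the bound of Proposition~\ref{lemma:trapped} must be read with the circuit length $\ell=4$ in place of $|V|$ (which is infinite on $\bb Z^d$), a point the paper's proof of Lemma~\ref{le:kturns} supports but its statement of Proposition~\ref{lemma:trapped} glosses over; aside from the harmless slip of writing an equality where the inclusion $\{\tilde\sigma_{k+1}<\infty\}\subseteq\{\tilde\sigma_k<\infty\}\cap A_k^{c}$ only gives an inequality, the argument is sound.
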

\begin{proof}
Denote by $B_k=B[0,k]$ the closed ball of center $0$ and radius $k\in \bb N$ in the graph $\bb Z^d$ with respect to the $\ell^1$-distance and denote by $\p B_{k}$ its inner boundary.  Recall that we are assuming $X_0=0$. For each $k\in\bb N$ we define the stopping time
\begin{equation}\label{stopping}
\tau_k\;=\;\inf\{n>0: X_n \in  B_{3k}^\complement\}
\end{equation}
and let
\begin{align*}
E_k\;\overset{\text{def}}{=}\; \big\{\tau_k<\infty\big\}\,.
\end{align*}
That is, $E_k$ is the event where the Ant RW escapes the ball of radius $3k$. Let $V(k)$ be the set of vertices $v\in \bb Z^d$ such that $d(v,B_{3k})=1$. It is elementary to check that, for each $v \in V(k)$, there exists a circuit $C_{v}$ of length $4$ such that $C_{v}\subset B_{3(k+1)}\backslash B_{3k}$, see Figure~\ref{Fk} for an illustration. These circuits are not unique. However in the sequel, for ease of notation, for each $v$ as above $C_{v}\subset B_{3(k+1)}\backslash B_{3k}$ denotes a fixed but arbitrarily chosen circuit.

 Recall the trapping event $T^C_m$ defined in \eqref{trappingforever} and let
\begin{align*}
F_k\;\overset{\text{def}}{=}\; E_k\bigcap \Big(\bigcup_{v\in V(k)} T^{C_{v}}_{\tau_k} \Big)\,.
\end{align*}
In other words,  $F_k$ is the event in which the Ant RW eventually escapes $B_{3k}$  and immediately after that, is trapped in a directed circuit of length four contained in $B_{3(k+1)}\backslash B_{3k}$, see  Figure~\ref{Fk}.
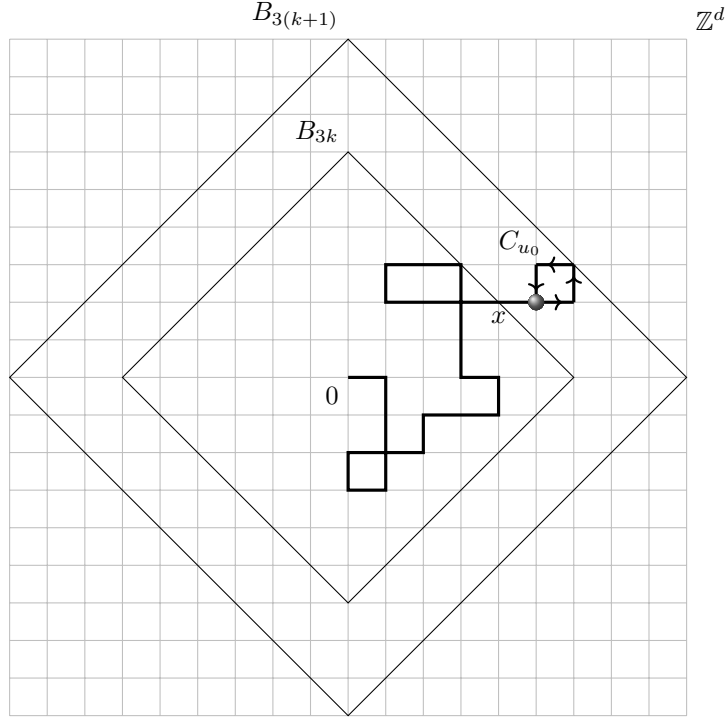
\begin{figure}[!htb]
\centering
\begin{tikzpicture}
\draw[step=0.5,lightgray] (-4.5,-4.5) grid (4.5,4.5);
\draw (0,3)--(3,0)--(0,-3)--(-3,0)--cycle;
\draw (0,4.5)--(4.5,0)--(0,-4.5)--(-4.5,0)--cycle;

\draw [very thick] (0,0)--(0.5,0)--(0.5,-1.5)--
(0,-1.5)--(0,-1)--(1,-1)--(1,-0.5)--(2,-0.5)--(2,0)--(1.5,0)--(1.5,1.5)--(0.5,1.5)--(0.5,1)--(2.5,1);

\draw [very thick, big arrow 3] (2.5,1) --(3,1);  
\draw [very thick, big arrow 3] (3,1)--(3,1.5);  
\draw [very thick, big arrow 3] (3,1.5)--(2.5,1.5);  
\draw [very thick, big arrow 3] (2.5,1.5)--(2.5,1);

\draw (4.5,4.5) node[anchor=south west]{$\bb Z^d$};
\draw (0,0) node[anchor=north east]{$0$};
\draw (0,3) node[anchor=south east]{$B_{3k}$};
\draw (0,4.5) node[anchor=south east]{$B_{3(k+1)}$};
\draw (2.7,1.5) node[anchor=south east]{$C_{u_0}$};
\draw (2,1) node[below]{$x$};

\shade[ball color=gray](2.5,1) circle (3pt);

\end{tikzpicture}
\caption{Event $F_k$. After exiting the ball $B_{3k}$, the Ant RW spins forever around a  circuit $C\subset B_{3(k+1)}\backslash B_{3k}$ of length $4$, which is indicated by  arrows. The gray ball represents the root $u_0$ of the circuit $C_{u_0}=(u_0,u_1,u_2,u_3)$. The vertex  $x$ is the last visited vertex of $\p B_{3k}$ before exiting $B_{3k}$.}
\label{Fk}
\end{figure}

We now claim that there exists some $\delta=\delta(d)>0$ such that 
\begin{equation}\label{FE}
\bb P\big(F_k\,|\,E_k\big)\;>\;\delta\,,\quad \forall\,k\in \bb N\,.
\end{equation}
To prove the claim we first apply the strong Markov property at time $\tau_k$, which yields that
\begin{align*}
\bb P\big(F_k\,|\,E_k\big) \;&=\;\bb P\bigg(\tau_k<\infty\,,\,\bigcup_{v\in V(k)} T^{C_{v}}_{\tau_k}\,\bigg|\,\tau_k<\infty\bigg)\\
&=\; \bb E\bigg[\bb P_{\xi_{\tau_k}}\bigg(\bigcup_{v\in V(k)} T^{C_{v}}_{0}\bigg)\,\bigg|\,\tau_k<\infty\bigg]\,.
\end{align*}
Note now that \[\{\tau_k<\infty\}\;=\;\bigcup_{v\in V(k)}\big\{\tau_k<\infty\,,\, X_{\tau_k}=v\big\}\]
and 
\begin{equation}\label{estimate}
\bb P_{\xi_{\tau_k}}\bigg(\bigcup_{v\in V(k)} T^{C_{v}}_{0}\bigg)\;\geq\; \bb P_{\xi_{\tau_k}}\big(T^{C_{u_0}}_{0}\big)\; \text{ on the event }\; \{\tau_k<\infty,X_{\tau_k}=u_0\}\,.
\end{equation}
Therefore, to obtain \eqref{FE} it is enough to get a uniform lower bound for the random variable on the right hand side of~\eqref{estimate} on the event $\{\tau_k<\infty,X_{\tau_k}=u_0\}$.

Immediately after exiting $B_{3k}$, the Ant RW has not crossed any edge in\break $B_{3(k+1)}\backslash B_{3k}$ except the  edge $\{X_{\tau_k-1}, X_{\tau_k}\}$ connecting the vertex $X_{\tau_k-1}\in \p B_{3k}$ to the root $u_0=X_{\tau_k}\in B_{3(k+1)}$ of the circuit  $C_{u_0}=(u_0,u_1,u_2,u_3)$. Therefore, at time $\tau_k$, all edges contained $B_{3(k+1)}\backslash B_{3k}$ have crossing number zero, except the two directed edges $(X_{\tau_k-1},u_0)$ and $(u_0, X_{\tau_k-1})$ which have crossing number $1$ and $-1$ respectively. Thus, it follows that $R_{\tau_k}^{C_{u_0}}\geq -1>-2$.
 Proposition~\ref{lemma:trapped} permits one to obtain the desired $\delta>0$, which is independent of $k$ and hence the claim.

Since $F_k\subset E_{k+1}^\complement$ and by the previous claim, we obtain that
\begin{equation*}
\bb P\big(E_{k+1}^\complement\cap E_k\big)\;>\;\delta\, \bb P\big(E_k\big)\,,\quad \forall\,k\in \bb N\,.
\end{equation*}
Since $E_{k+1}\subset E_{k}$, this implies that
\begin{equation*}
\bb P\big(E_{k}\big)-\bb P\big(E_{k+1}\big)\;>\;\delta\, \bb P\big(E_k\big)\,,\quad \forall\,k\in \bb N\,,
\end{equation*}
which leads to
\begin{equation*}
\bb P\big(E_{k+1}\big)\;<\;(1-\delta)^{k+1}\, \bb P\big(E_0\big)\;=\;(1-\delta)^{k+1}\,,\quad \forall\,k\in \bb N\,.
\end{equation*}
This yields $\bb P \big (\bigcap_{k=1}^\infty E_k\big)=0$ and finishes the proof. 
\end{proof}
\begin{figure}[!htb]
\centering
\begin{tikzpicture}
\draw[step=0.5,lightgray] (-3.5,-3.5) grid (3.5,3.5);
\draw (0,3)--(3,0)--(0,-3)--(-3,0)--cycle;

\draw [very thick] (0,0)--(0.5,0)--(0.5,-1.5)--
(0,-1.5)--(0,-1)--(1,-1)--(1,-0.5)--(2,-0.5)--(2,0)--(1.5,0)--(1.5,0.5)--(1,0.5)--(1,1)--(1.5,1)--(1.5,1.5)--(2.5,1.5)--(2.5,2);

\draw [very thick, dashed] (1.5,1.5)--(0.5,1.5)--(0.5,1)--(0,1)--(0,1.5)--(-0.5,1.5)--(-0.5,0.5);

\draw (3.5,3.5) node[anchor=south west]{$\bb Z^d$};
\draw (0,0) node[anchor=north east]{$0$};
\draw (0,3) node[anchor=south east]{$B_{k}$};

%
\shade[ball color=gray](-0.5,0.5) circle (3pt);
\shade[ball color=black](2.5,2) circle (3pt);

\end{tikzpicture}
\caption{Coupling between the Ant RW on $\bb Z^d$ and the Ant RW's on $B_k\subset \bb Z^d$, $k\in \bb N$. After the hitting time $\sigma_k$ of $\p B_k$, the Ant RW $X_n^{B_k}$ evolves independently of the Ant RW $X_n$. Above, the dashed path represents $X_n^{B_k}$ for times greater than~$\sigma_k$. Note that, immediately after $\sigma_k$, the Ant RW $X_n$ may or may not  exit $B_k$. The gray ball represents the  (final) position of $X_n^{B_k}$ and the black ball the (final) position of $X_n$.}
\label{coupling}
\end{figure}

\begin{proof}[Proof of the Theorem~\texorpdfstring{\ref{thm2.1}}{2.1} in the case \texorpdfstring{$G=\bb Z^d$ with $d\geq 2$}{Zd, d greater than 2}]

For any $k\in\bb N$ we denote by $(X_n^{B_k})_{n\geq 0}$ the Ant RW on $B_k$.  We will construct a  coupling  
\begin{equation*}
\Big((X_n)_{n\geq 0}, (X_n^{B_1})_{n\geq 0}, (X_n^{B_2})_{n\geq 0}, (X_n^{B_3})_{n\geq 0},\ldots\Big)
\end{equation*} 
of all these stochastic processes.   To do so, we first assume that $(X_n)_{n\geq 0}$ has been constructed on some probability space, which will be enriched in the sequel. On this probability space, we define stopping times $\sigma_k$ defined via
\begin{align*} 
\sigma_k\;=\;\min\big\{n\geq 0: X_n \in \p B_k\big\} \,.
\end{align*}
To construct $(X_n^{B_k})_{n\geq 0}$ from $(X_n)_{n\geq 0}$, we let $X_n^{B_k}\overset{\text{def}}{=}X_n$ for  $n< \sigma_k$. If $\sigma_k<\infty$, then for $n\geq \sigma_k$ we let $(X_n^{B_k})_{n \geq\sigma_k}$ evolve independently of $(X_n)_{n\geq \sigma_k}$ on $B_k$, see Figure~\ref{coupling} for an illustration. One then readily checks that $(X_n^{B_k})_{n\geq 0}$ indeed has the law of the Ant RW on $B_k$. Moreover, for $n< \sigma_k$, one has that 
\begin{equation}\label{62}
X_n\;=\; X_n^{B_{k}}\;=\;X_n^{B_{k+1}}\;=\;X_n^{B_{k+2}}\;=\;\cdots
\end{equation}
By Theorem~\ref{thm2.1} for finite graphs, we know that for any $k\in \bb N$ there exists a random  circuit $C=C(k)$ such that 
$(X_n^{B_k})_{n\geq 0}$ is almost surely eventually trapped in  $C$. By Proposition~\ref{prop62}, the Ant RW $X$ on $\bb Z^d$ is bounded. Hence, almost surely there exists a random index $k\geq 1$ such that $\sigma_k=\infty$ and hence \eqref{62} holds for any $n\in \bb N$. Therefore, the Ant RW on $\bb Z^d$ is almost surely trapped in some (random) circuit $C$,  thus concluding the proof.
\end{proof}

\begin{remark}\label{lattice}
\rm The key property of the lattice $\bb Z^d$, $d\geq 2$, in proof of Theorem~\ref{thm2.1} item b) is the presence of circuits of fixed length starting from any vertex (outside of any given large set), which lead to the conditional probability \eqref{FE}. Keeping this in mind, the  proof of Theorem~\ref{thm2.1} item b) can be easily adapted to different lattices as the slab $\{1,\ldots,N\}\times \bb Z^d$, regular non-square lattices etc.
\end{remark}

\appendix

\section{Proof  of \texorpdfstring{Proposition~\ref{th:it_is_markovian}}{Proposition 2.1}}
\label{sec:d=1}

First observe that all crossing numbers $c_n(x,y)$ take values in the set $\{-1,0,$\break $1\}$. Indeed, this is a direct consequence of the fact that the graph $\bb Z$ is  a tree. Thus, for each $n$ and each pair $x,y\in\bb Z$  we have that $a_n(x,y)\in\{e^{-\beta}, 1, e^{\beta}\}$. See Figure~\ref{dim_one} for an illustration. 
\begin{figure}[H]
\centering
\begin{tikzpicture}
\centerarc[thick,<-](1.5,0.3)(10:170:0.45);
\centerarc[thick,->](1.5,-0.3)(-10:-170:0.45);
\centerarc[thick,<-](2.5,0.3)(10:170:0.45);
\centerarc[thick,->](2.5,-0.3)(-10:-170:0.45);
\centerarc[thick,<-](3.5,0.3)(10:170:0.45);
\centerarc[thick,->](3.5,-0.3)(-10:-170:0.45);
\centerarc[thick,->](4.5,-0.3)(-10:-170:0.45);
\centerarc[thick,<-](4.5,0.3)(10:170:0.45);
\centerarc[thick,->](5.5,-0.3)(-10:-170:0.45);
\centerarc[thick,<-](5.5,0.3)(10:170:0.45);
\centerarc[thick,->](6.5,-0.3)(-10:-170:0.45);
\centerarc[thick,<-](6.5,0.3)(10:170:0.45);
\centerarc[thick,->](7.5,-0.3)(-10:-170:0.45);
\centerarc[thick,<-](7.5,0.3)(10:170:0.45);
\centerarc[thick,->](8.5,-0.3)(-10:-170:0.45);
\centerarc[thick,<-](8.5,0.3)(10:170:0.45);

\draw (1,0) -- (9,0);
\draw[dashed] (0,0) -- (1,0);
\draw[dashed] (9,0) -- (10,0);

\shade[ball color=black](7,0) circle (0.25);

\filldraw[fill=white, draw=black]
(1,0) circle (.25)
(2,0) circle (.25)
(3,0) circle (.25)
(4,0) circle (.25)
(5,0) circle (.25)
(6,0) circle (.25)
(8,0) circle (.25)
(9,0) circle (.25);

\draw 
(0.6,-0.1) node[anchor=north] {\tiny $\bf-3$}
(1.6,-0.1) node[anchor=north] {\tiny $\bf-2$}
(2.6,-0.1) node[anchor=north] {\tiny $\bf-1$}
(3.6,-0.1) node[anchor=north] {\tiny $\bf 0$}
(4.6,-0.1) node[anchor=north] {\tiny $\bf 1$}
(5.6,-0.1) node[anchor=north] {\tiny $\bf 2$}
(6.6,-0.1) node[anchor=north] {\tiny $\bf 3$}
(7.6,-0.1) node[anchor=north] {\tiny $\bf 4$};
\draw (1.5,0.8) node[anchor=south]{\small $1$};
\draw (1.5,-0.8) node[anchor=north]{\small $1$};
\draw (2.5,0.8) node[anchor=south]{\small $1$};
\draw (2.5,-0.8) node[anchor=north]{\small $1$};
\draw (3.5,0.8) node[anchor=south]{\small $1$};
\draw (3.5,-0.8) node[anchor=north]{\small $1$};
\draw (4.5,0.8) node[anchor=south]{\small $e^\beta$};
\draw (4.5,-0.8) node[anchor=north]{\small $e^{-\beta}$};
\draw (5.5,0.8) node[anchor=south]{\small $e^\beta$};
\draw (5.5,-0.8) node[anchor=north]{\small $e^{-\beta}$};
\draw (6.5,0.8) node[anchor=south]{\small $e^\beta$};
\draw (6.5,-0.8) node[anchor=north]{\small $e^{-\beta}$};
\draw (7.5,0.8) node[anchor=south]{\small $1$};
\draw (7.5,-0.8) node[anchor=north]{\small $1$};
\draw (8.5,0.8) node[anchor=south]{\small $1$};
\draw (8.5,-0.8) node[anchor=north]{\small $1$};
\end{tikzpicture}
\caption{Illustration of the jump weights, where  $X_n=3$.}
\label{dim_one}
\end{figure}
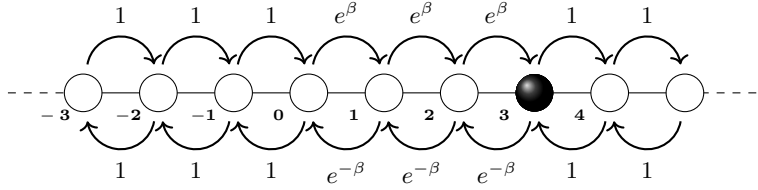
We can say even more about the weights.  
To that end, for an edge $e=(e_-, e_+)$ with starting point $e_-$ and end point $e_+$ in $\bb Z$ and for $x\in\bb Z$, we write $e\leq x$ if $e_-\vee e_+\leq x$ and $e\geq x$ if $e_-\wedge e_+\geq x$. Moreover we also write $a_n(e)$ instead of $a_n(e_-, e_+)$.
We then claim that for all $n\in\bb N$ and all directed edges $e=(e_-, e_+)$, there are three cases.
\begin{itemize}
\item[(1)] $X_n=0$. In this case we simply have $a_n(e)= 1$.
\item[(2)] $X_n>0$. In this case one has
\begin{equation*}
a_n(e)\;=\; 
\begin{cases}
1, &\text{if } e\leq 0 \text{ or }e\geq X_n,\\
e^{\beta}, &\text{if }0\leq e \leq X_n \text{ and also }e_-< e_+,\\
e^{-\beta}, &\text{if }0\leq e\leq X_n \text{ and also }e_-> e_+.
\end{cases}
\end{equation*}
\item[(3)] $X_n <0$. In this case one has
\begin{equation*}
a_n(e)\;=\; 
\begin{cases}
1, &\text{if } e\geq 0 \text{ or }e\leq X_n,\\
e^{\beta}, &\text{if }X_n\leq e \leq 0 \text{ and also }e_-> e_+,\\
e^{-\beta}, &\text{if }X_n\leq e \leq 0 \text{ and also }e_-< e_+.
\end{cases}
\end{equation*}
\end{itemize}
The above is easily proved by induction on $n$. In particular, we see that the position of the walk at time $n$ completely determines the environment $(a_n(\cdot,\cdot))_{n\in\bb N}$ and therefore $(X_n)_{n\in\bb N}$ is a Markov chain.
The transition probabilities claimed in~\eqref{eq:trans1d} are then an immediate consequence of~\eqref{eq:transitions}.

To deduce the desired transience we then define the following sequence of stopping times
\begin{align*}
\tau_1&\;=\;\min\{n> 0: X_n=0\} \; \text{ and}\\
\tau_k&\;=\;\min\{n> \tau_{k-1}: X_n=0\}\,, \; \text{ for }k\geq 2\,.
\end{align*}
Now observe that as a consequence of~\eqref{eq:trans1d} the Ant RW behaves as an asymmetric random walk with bias $\frac{1-e^{-\beta}}{1+e^{-\beta}}$  to the right on the set $\{x\geq 1\}$, respectively to the left  on the set  $\{x\leq -1\}$.
Hence, we see that $(X_{\tau_k+1},\ldots, X_{\tau_{k+1}})$ has the distribution of the asymmetric random walk just described on $x\geq 1$ provided that $X_{\tau_k+1}=1$ respectively on $x\leq -1$ provided that $X_{\tau_k+1}=-1$.  Moreover it follows from~\eqref{eq:trans1d} that for all $k$
\begin{equation*}
\bb P(X_{\tau_k+1}=\pm 1)\; =\; \frac12\,.
\end{equation*}
Furthermore the processes $(Y^k_n)_{0\leq n\leq \tau_{k+1}-\tau_k} = (X_{\tau_k+n})_{0\leq n\leq \tau_{k+1}-\tau_k}$ indexed by $k\in\bb N$ are independent of each other. Hence, in view of the fact that the asymmetric random walk has  positive probability of never returning to the origin, we conclude that almost surely there is $k$ such that $\tau_k=\infty$. Thus, almost surely the walk eventually does not return to the origin. 
Finally, the law of  large numbers \eqref{lln} is a consequence of the law of numbers for the asymmetric random walk.

\section*{Acknowledgements}
The authors would like to thank Milton Jara for helping us with the argument on the proof of Proposition~\ref{prop62}. The authors are also grateful with Augusto Quadros and to the anonymous referees for helping us to simplify the proof of Theorem \ref{thm2.1}.  D.~E. gratefully acknowledges financial support 
from the National Council for Scientific and Technological Development - CNPq via a 
Universal grant 409259/2018-7. T.~F. was supported by a project Jovem Cientista-9922/2015, FAPESB-Brazil and by National Council for Scientific and Technological Development (CNPq) through a Bolsa de Produtividade.  G.~R. was supported by a Capes/PNPD fellowship 888887.313738/2019-00.
D.~E and G.~R are also thankful to the Hausdorff Institute for their hospitality during the program \textit{Randomness, PDEs and Nonlinear Fluctuations} during which part of this work was completed.
G.~R. are also thankful to IMPA for their hospitality and support during the summer school program of 2020.

\bibliographystyle{plain}
\bibliography{bibliografia}

\end{document}